\documentclass[a4paper,8pt]{article}
\usepackage[english]{babel}
\usepackage[utf8x]{inputenc}
\usepackage[T1]{fontenc}
\usepackage{caption}

\usepackage[a4paper,top=3cm,bottom=2cm,left=3cm,right=3cm,marginparwidth=1.75cm]{geometry}
\usepackage{subcaption}
\usepackage{graphicx}
\usepackage{siunitx}
     \usepackage{setspace}
\usepackage{multirow}
\usepackage{hhline}
\usepackage{mathrsfs} 
\usepackage{amsmath}
\usepackage{amssymb}
\usepackage{amsthm}
\usepackage{dsfont}

\usepackage{multicol}
\usepackage{nomencl}
\makenomenclature

\renewcommand\nompreamble{\begin{multicols}{2}}
\renewcommand\nompostamble{\end{multicols}}

\usepackage{float}
\usepackage{graphicx,subcaption}
\usepackage{color}
\usepackage{tabu}
 
\usepackage{setspace}
\doublespacing\usepackage{setspace}
\doublespacing
\usepackage{amsmath}

\makeatletter

\newcolumntype{R}[1]{>{\raggedleft\arraybackslash}m{#1}} 
\newcolumntype{L}[1]{>{\raggedright\arraybackslash}m{#1}} 
\newcolumntype{C}[1]{>{\centering\arraybackslash}m{#1}}

\renewcommand{\fnum@figure}{\small\textbf{\figurename\thefigure}} 
\makeatother
\usepackage[algoruled,vlined,boxed,longend,english,lined,boxed,commentsnumbered]{algorithm2e}
\numberwithin{equation}{section}%
\textheight=25.7cm
\textwidth=18cm
\topmargin=-1.7cm
\oddsidemargin=-1.0cm
\evensidemargin=-1.0cm

\theoremstyle{plain}
\newtheorem{theorem}{Theorem}[section]

\theoremstyle{definition}
\newtheorem{definition}{Definition}[section]

\theoremstyle{remark}
\newtheorem{remark}{Remark}[section]

\theoremstyle{remarks}
\newtheorem{remarks}{Remarks}[section]




{%

\begin{enumerate}}%
{\end{enumerate}
}

\def\be{\begin{equation}}
\def\ee{\end{equation}}
\def\ba{\begin{aligned}}
\def\ea{\end{aligned}}
\def\bes{\begin{equation*}}
\def\ees{\end{equation*}}
\def\bc{\begin{cases}}
\def\ec{\end{cases}}
\numberwithin{equation}{section}
\everymath{\displaystyle}
\usepackage{lineno}

\usepackage{diagbox}

\title{\textbf{Determination of the flux terms in a time fractional viscoelastic equation}}

\author{Mohamed BenSalah \thanks{ Mohamed BenSalah,\hfil\break
Department of Mathematics \newline
ISSAT of Sousse, University of Sousse, Rue Tahar Ben Achour, Sousse 4003, Tunisia,\hfil\break
E-Mail : mohamed.bensalah@fsm.rnu.tn} \, ,  Salih Tatar \thanks{ Salih Tatar, Corresponding Author \hfil\break
Department of Mathematics $\&$  Computer Science, College of Science and General Studies,\newline
Alfaisal University, Riyadh, KSA,\hfil\break
E-Mail : statar@alfaisal.edu} \, , S\"{u}leyman Ulusoy \thanks{ S\"{u}leyman Ulusoy,\hfil\break
Department of Mathematics and Physics, Faculty of Arts and Sciences,\newline
American  University of Ras Al Khaimah, UAE\hfil\break
E-mail : suleyman.ulusoy@aurak.ac.ae} \, and   Masahiro Yamamoto \thanks{ Masahiro Yamamoto,\hfil\break
Department of Mathematical Sciences, \newline
 The University of Tokyo 3-8-1, Komaba, Meguro, Tokyo 153, Japan,\hfil\break
E-Mail : myama@ms.u-tokyo.ac.jp} \ } 
\doublespacing
\begin{document}
\maketitle
\begin{abstract}
In this paper, we study the flux identification problem for a nonlinear time-fractional viscoelastic equation with a general source function based on the boundary measurements. We prove that the direct problem is well-posed, i.e., the solution exists, unique and depends continuously on the heat flux. Then the Fr\'echet differentiability of the cost functional is proved. The Conjugate Gradient Algorithm, based on the gradient formula for the cost functional, is proposed for numerical solution of the inverse flux problem. The numerical examples, both with noise-free and noisy data, provide a clear demonstration of the applicability and accuracy of the proposed method.
\end{abstract}
\maketitle

\noindent {{\bf Keywords.} Fractional derivative; flux identification; elastoplastic torsion; plasticity function; engineering materials; Adjoint problem; Conjugate gradient method.} \\

\noindent {{\bf AMS Subject Classifications.} 26A33, 35R11, 35R30, 74S40.

\section{Introduction} \label{(sec1)}
The quasi-static mathematical model of the elastoplastic torsion of a strain hardening bar is given by the following nonlinear boundary value problem \cite{kac}:
\begin{eqnarray}\label{S1-5}
\left \{ \begin{array}{l}
-\nabla \cdot \big(k(|\nabla u|^{2}) \nabla u \big)= 2\varphi, \, x \in\Omega\subset \mathbb{R}^2, \\
u(x)=0, \,  x \in \partial \Omega,
\end{array} \right.
\end{eqnarray}
where $\Omega \subset \mathbb{R}^2$ is the cross section of a bar, $\varphi$ is the angle of twist per unit length, $T:=|\nabla u|$ is the stress intensity, $u(x)$ is the Prandtl stress function and $k(T^2)$ is the plasticity function that satisfies the following conditions \cite{kac}, \cite{lan}:
\begin{eqnarray}\label{s1-3}
\left\{ \begin{array}{lc}
0 < c_0 \leq k(T^2) \leq c_1,\\
c_2 \leq k(T^2)+2k'(T^2)T^2 \leq c_3, \forall T^2 \in \big [{T_*}^2,{T^*}^2 \big],\\
k'(T^2)\leq 0,  \\
\exists {T_0}^2 \in ({T_*}^2,{T^*}^2):~k(T^2) = \frac {1} {G} ,~\forall T^2 \in \big [{T_*}^2, {T_0}^2 \big],
\end{array}\right.
\end{eqnarray}
where $T_0:=\max_{x \in \overline{ \Omega}} \vert \nabla u(x)\vert$ corresponds to the yield stress which is the maximum stress or force per unit area within a  material that can arise before the onset of permanent deformation, $\frac {1} {G}$ is the shear compliance, $G = \frac {E} {2(1+\nu)}$ is the elastic shear modulus, $E>0$ is the Young's modulus, $\nu$ is the Poisson coefficient. For any angle $\varphi>0$, all points of the bar have non-zero strain intensity which means the  ${T_*}^2>0$ in (\ref{s1-3}) makes sense. The last condition in (\ref{s1-3}) means that the elastic deformations precede the plastic ones.  The first two conditions in (\ref{s1-3}) imply the ellipticity of the nonlinear equation in (\ref{S1-5})  and hence positivity of the function $u(x), x \in  \Omega. $ The third condition  in (\ref{s1-3}) follows from the concavity of the curve $k(T^2) T$. A set that satisfies the conditions (\ref{s1-3}) and the condition $k \in L_\infty (0, {T^*}^2 )$ is denoted by   $\mathbb{K}$. We note that the following inequality is satisfied for a function $k  \in \mathbb{K} $ \cite{Arzu} :
\begin{equation}\label{s1-4}
 \left[  k(T^2)T - k(\tilde{T}^2)\tilde{T}\right]\cdot(T - \tilde{T} )\geq \tilde{C} |T - \tilde{T}|^2, \, \tilde{C} > 0.
\end{equation}

For many engineering materials, the function  $k(T^2)$ in (\ref{S1-5}) has the following form:
\begin{eqnarray}\label{engmaterial}
k(T^2) = \left \{ \begin{array}{ll}
1/G, \quad \quad  \quad \quad \quad \quad  \quad  \quad \quad   T^2 \leq {T_0}^2,  \\
1/G~\left (T^2 / {T_0}^2 \right )^{0.5(\kappa - 1)}, \, \quad  {T_0}^2<T^2,
\end{array} \right .
\end{eqnarray}
which corresponds to the Ramberg-Osgood curve $\sigma_i=\sigma_0(e_i/e_0)^\kappa$, where $\kappa \in (0,1)$ is the strain hardening exponent. The values $\kappa=1$ and $\kappa=0$ correspond to pure elastic and pure plastic cases, respectively. Evidently, this function satisfies all conditions in (\ref{s1-3}).\\

The direct problem (\ref{S1-5}) and the corresponding inverse coefficient problem have been well-studied both theoretically and numerically in the mathematical literature, \cite{Arzu} - \cite{has}.  However the real torsion process is not quasi-static, it depends on the time. The mathematical model of the real torsion process is given by the following evolutional equation:

\begin{eqnarray}\label{s1-6}
u_t-\nabla \cdot \big(k(T^2) \nabla u \big)= 2t,~(x, t) \in \Omega_\mathcal{T},
\end{eqnarray}
where $\Omega_\mathcal{T} := \Omega \times (0,\mathcal{T})$ and $\mathcal{T}>0$ is a final time. The direct and the corresponding inverse coefficient problem for the equation (\ref{s1-6}) have been well-studied both theoretically and numerically in the mathematical literature, \cite{tatar10}, \cite{tatar11}. The inverse coefficient problems for the equations in (\ref{S1-5}) and (\ref{s1-6}) consist of determining $\left (u(x, t) \, ,k(T^2) \right )$ from measured values of the torque  $\mathcal{M}_i:= \mathcal{M} (\varphi_i )=2\int \limits_{\Omega} u(x ; \varphi_i)dx$ and $\mathcal{M}_i:= \mathcal{M} (t_i )=2\int \limits_{\Omega} u(x ; t_i)dx$ respectively,  $i = 1, \dots , N $, where $N$ is the number of measurements. \\

We now motivate the use of fractional time derivative for our model by giving some examples in the literature. The equation (\ref{s1-6})  is known as Perona-Malik(PM) model and was first proposed in 1990, see \cite{perona}. This  equation uses inhomogeneous diffusivity coefficient and  widely used in image processing for purposes like smoothing, restoration, segmentation, filtering or detecting edges. Perona and Malik  proposed this model because they wanted to avoid localization problems of linear diffusion filtering. For this aim,  an inhomogeneous process is applied at locations which have a larger likelihood to be edges.  This likelihood is measured by $\vert \nabla u \vert^2$. The fractional order PDEs and their applications have been investigated in image processing community since fractional order equations could capture more features of images, especially the texture and other details due to the non-local property of fractional derivative. The fractional equations  are Euler-Lagrange equations of a cost functional which is an increasing function of the absolute value of the fractional derivative of the image intensity function, so these equations can be seen as generalizations of second-order and fourth-order anisotropic diffusion equations. In \cite{bai}, the authors introduced a new class of fractional-order anisotropic diffusion equations for noise removal based on PM equation for image denoising, where fractional order derivative was constructed by the fast Fourier transform algorithm. In \cite{wei}, the authors proposed a class of fractional order multi-scale variational models for image denoising and analyzed some properties of the fractional order total variation operator and its conjugate operator.   The authors proposed a new variation denoising model, where fractional order derivative was introduced into fidelity term to measure the similarity in the variation of images. The model can prevent the staircase effect and simultaneously enhance the noisy image because of the long-term memory and nonlocality of the fractional derivative in \cite{mako}.  We refer the readers to  \cite{osher1}-\cite{pu} and some of the references cited therein for more motivation on fractional models in image processing. Fractional models are also used in mechanics and material sciences. The recovery of the original dimensions of a deformed body when the load is removed is known as elastic behavior. If the elastic limit is exceeded, the body will experience a permanent set or deformation when the load is removed. A body which is permanently deformed is said to have undergone plastic deformation. For most materials, as long as the load does not exceed the elastic limit, the deformation is proportional to the load. Elastic behaviour  is modeled by Hooke's law $\sigma = E \varepsilon$ or $\frac {d^1\sigma}{d\varepsilon^1} = E$, where $\sigma$ is the normal stress and $\varepsilon$ is the normal strain. Plastic  behaviour is represented by $\sigma = \sigma_0$ or $\frac {d^0\sigma}{d\varepsilon^0} = \sigma_0$, where $\sigma_0$ is the elastic limit. Then, elastoplasticity may be modeled by $\frac {d^\beta\sigma}{d\varepsilon^\beta} = C,  0< \beta < 1$. Furthermore,  according to Newton's law, viscosity is represented as the first order derivative with respect time $t$, that is $\sigma  = c\frac {d\varepsilon} {dt}$. Considering this together with the Hooke's law, viscoelasticity, which is a combination of elastic and viscous behaviours, may be represented by $\sigma  = a\frac {d^\beta\varepsilon} {dt^\beta}, 0< \beta < 1$. Many papers can also be found related to viscoelasticity, viscoplasticity and viscoelastoplasticity as applications of fractional calculus, see \cite{men} - \cite{sun} and the references cited therein. Motivated by the aforementioned works, the authors in \cite{srm} studied the direct and the inverse problem for the following equation that may model viscoelastic torsion:
\begin{eqnarray}\label{s1-7}
{D_t}_ {0^+}^{\beta} u(x, t) -\nabla \cdot \big(k(T^2) \nabla u \big)= 2t,
\end{eqnarray}
where ${D_t}_ {0^+}^{\beta} u$ is the left-sided Caputo fractional derivative. The left-sided and the right-sided Caputo fractional derivatives are defined as follows, respectively:
\begin{eqnarray}\label{s1-1}
\left\{ \begin{array}{lc}
{D_t}_ {a^+}^{\beta} u:= \frac{1}{\bold \Gamma(1 - \beta)} \int_a^t (t-\tau)^{-\beta} \frac{\partial}{\partial \tau } u(x,\tau) d \tau,\\
{D_t}_ {b^-}^{\beta} u:= -  \frac{1}{\bold \Gamma(1 - \beta)} \int_t^b (\tau - t)^{-\beta} \frac{\partial}{\partial \tau } u(x,\tau) d \tau,
\end{array}\right.
\end{eqnarray}
where $\bold \Gamma$ is the Gamma function. We note that there is another kind of fractional derivative used frequently and is called the Riemann-Lioville fractional derivative. We note that the left-sided Caputo and Riemann-Lioville fractional derivatives agree if $u(x, a) = 0$ and the right-sided Caputo and Riemann-Lioville fractional derivatives agree if $u(x, b) = 0$. We refer the readers to \cite{Kil} and \cite{11} for further properties of the Caputo and Riemann-Lioville fractional derivatives. We note that the following fractional integration by parts formula holds provided that Caputo and Riemann-Lioville fractional derivatives agree, see \cite{Samko}, page 46 : 
\begin{equation}\label{s1-2}
 \int_a^b {D_t}_ {a^+}^{\beta} u(x, t) \,\, v(x, t) \,dt  =   \int_a^b u(x, t) \,\,  {D_t}_ {b_-}^{\beta} v(x, t) \, dt,
\end{equation}
where $u(x, t)$ and $v(x, t)$ are continuous on $\Omega_\mathcal{T}$ , ${D_t}_ {a^+}^{\beta} u(x, t)$ and ${D_t}_ {b_-}^{\beta} v(x, t)$ exist at every point and continuous  on $\Omega_\mathcal{T}$.  The direct and the inverse problem for the equation (\ref{s1-7}) are well-studied both numerically and theoretically, see \cite{srm}, \cite{stsuf}. The inverse problem in these papers consists of determining $\left (u(x, t) \, ,k(T^2) \right )$ from the measured values of the torque. In \cite{srm}, the authors solve the direct problem by the method of lines, then  a numerical method based on discretization of the minimization problem, steepest descent method, and least-squares approach is proposed for the solution of the inverse problem.  In \cite{stsuf}, the authors prove that the direct problem has a unique solution. Afterwards they prove the continuous dependence of the solution of the corresponding direct problem on the coefficient, the existence of a quasi-solution of the inverse problem is obtained in the appropriate class of admissible coefficients. An inverse problem is also studied for $\beta = 1$ in \cite{burhan} . In the current paper, we study the heat flux identification problem  for the equation (\ref{s1-7}) replacing $2t$ by a general source function $F(x, t)$ based on the boundary measurements. Recently, there has been a growing interest in inverse problems with fractional derivatives. Usually, in these works a fractional time derivative is considered and either determination of that or a source term or a coefficient under some additional condition(s) is the inverse problem. These problems are physically and practically very important. We list some of the   references  \cite{CNYY}-\cite{stsu}. Our paper can be regarded as an addition to these studies. \\   

This paper is organized as follows: In the section \ref{sec2}, we formulate the direct and the inverse problems, it is also proved that the direct problem is well-posed. In section \ref{sec3}, we linearize the nonlinear problem to a linear one and derive integral relationship between solutions of the introduced adjoint problems and the direct problem, then we prove the Fr\'echet  differentiability of the cost functional.  In section \ref{sec4},  a conjugate gradient method is given for the numerical solution of the inverse problem. The numerical solutions for both the direct and the inverse problems are addressed in section \ref{sec5}. 

\section{The direct and the inverse problems}\label{sec2}
In this section, we formulate the direct and the inverse problems. We also prove that the direct problem has a unique solution and the solution depends continuously on the fluxes. Then we prove that the inverse problem has a quasi-solution. Throughout the paper while, $\Vert \cdot \Vert$  denotes the usual $L^2(\Omega)-$norm, $\Vert \cdot \Vert_X$ denotes the norm in a Hilbert space $X$.  We first consider the following problem:
\begin{eqnarray}\label{s2-1}
\left \{ \begin{array}{l}
\displaystyle  {D_t}_ {0^+}^{\beta} u = \nabla \cdot \big(k(\vert \nabla u \vert ^2) \nabla u \big) + F(x, t),~(x,t)\in \Omega_\mathcal{T}, \\
- k(\vert \nabla u \vert ^2) \frac {\partial u}{\partial n} = f_1(x, t), ~(x,t)\in \Gamma_1 ^ \mathcal{T},  \\
- k(\vert \nabla u \vert ^2) \frac {\partial u}{\partial n} = f_2(x, t), ~(x,t)\in \Gamma_2 ^ \mathcal{T}, \\
u(x,t) = 0, ~(x,t)\in \Gamma_3^ \mathcal{T} \cup \Gamma_4^ \mathcal{T}, \\
u(x,0) = 0, \quad x \in \Omega,\\
\end{array} \right.
\end{eqnarray}

\noindent where $\beta \in (0, 1)$ is the fractional order of the time derivative,  $x := (x_1, x_2)$, $\mathcal{T} > 0$ is a final time, $\Omega := (0, 1) \times (0, 1)$,  $\Omega_ \mathcal{T} := \Omega \times (0, \mathcal{T})$, $\Gamma_i ^ \mathcal{T} := \Gamma_i \times (0, \mathcal{T}), i = 1, 2, 3, 4$,  $\Gamma_1 := \{0\} \times (0, 1)$, $\Gamma_2 := (0, 1) \times \{0\}$,  $\Gamma_3 := \{1\} \times (0, 1)$, $\Gamma_4 := (0, 1) \times \{1\}$ and $\Gamma := \bigcup \limits_{i=1}^{4} \Gamma_{i}$, $\Gamma_i  \cap \Gamma_j  = \emptyset, i \neq j$, $meas(\Gamma_i) \neq 0$,  is a piecewise smooth boundary of $\Omega$. For given $\beta \in (0, 1)$, $k \in \mathbb{K}$, $F(x, t) \in L_2(\Omega_ \mathcal{T})$, $f := (f_1(x, t), f_2(x, t)) \in L_2(\Gamma_1 ^ \mathcal{T}) \times  L_2(\Gamma_2 ^ \mathcal{T}) $, the problem (\ref{s2-1}) is called the direct problem. \\ 
 
Next we define the inverse problem. The inverse problem here consists of determining the unknown fluxes $f = (f_1(x, t), f_2(x, t))$ in the problem (\ref{s2-1}) from the following additional (measured output) data:
 \begin{eqnarray}\label{s211_1}
 \left \{ \begin{array}{l}
h_1(x, t) = u(x,t),  \, (x, t) \in \Gamma_1 ^ \mathcal{T},  \\
h_2(x, t) = u(x, t),  \, (x, t) \in \Gamma_2 ^ \mathcal{T}.
\end{array} \right.
\end{eqnarray}
That is, the problem (\ref{s2-1})-(\ref{s211_1}) is called the inverse problem for the given inputs $k, F(x, t), h := (h_1(x, t), h_2(x, t))$ and $\beta$. For the consistency of the  conditions in (\ref{s2-1})  and (\ref{s211_1}), it is assumed that $- k(\vert \nabla h_1 \vert ^2) \frac {\partial h_1}{\partial n} = f_1(x, t), ~(x,t)\in \Gamma_1 ^ \mathcal{T}$ and $- k(\vert \nabla h_2 \vert ^2) \frac {\partial h_2}{\partial n} = f_2(x, t), ~(x,t)\in \Gamma_2 ^ \mathcal{T}$.  We define the class of admissible fluxes $f := (f_1(x, t), f_2(x, t)) \in L_2(\Gamma_1 ^ \mathcal{T}) \times  L_2(\Gamma_2 ^ \mathcal{T}) $ for the inverse problem. Let  $W : = \mathcal{F}_1 \times \mathcal{F}_2 \subseteq L_2(\Gamma_1 ^ \mathcal{T}) \times L_2(\Gamma_2 ^ \mathcal{T}) $. W is called the class of admissible fluxes if the following conditions are satisfied: 
\begin{equation}\label{s21_1} 
\left \{ \begin{array}{l}
- \infty < \underline{F1} < f_1(x, t) < \overline{F1} < \infty, \\
- \infty < \underline{F2} < f_2(x, t) < \overline{F2} < \infty,
\end{array} \right.
\end{equation}
\noindent where $\underline{F1}, \, \overline{F1}, \, \underline{F2}, \,  \overline{F2}$ are constants. Evidently, $W \subseteq L_2(\Gamma_1 ^ \mathcal{T}) \times L_2(\Gamma_2 ^ \mathcal{T})$ is a closed and convex subset.\\

 For each $t \in (0, \mathcal{T})$, the inverse problem can be reformulated as a solution of the following nonlinear functional equation
\begin{equation}\label{s211_3}
u(x, t; f) = h, ~ x \in \Gamma_1 \cup \Gamma_2,
\end{equation}
where $u(x, t; f) $ denotes the solution of (\ref{s2-1}) for a given $f \in W$. However, due to measurement errors, in practice exact equality in (\ref{s211_3})  is usually not achieved. Hence, one needs to introduce the following auxiliary (cost) functional:
\begin{equation}\label{s2111_1}
J(f) = \int_0^\mathcal{T} \int \limits_{\Gamma_1}   \bigg \vert   u(x, t; f)  -  h_1(x, t) \bigg\vert^2 \, dx \, dt + \int_0^\mathcal{T} \int \limits_{\Gamma_2}   \bigg \vert   u(x, t; f)  -  h_2(x, t) \bigg\vert^2 \, dx \, dt,
\end{equation}
and consider the following minimization problem:
\begin{equation}\label{s21111_1}
J(f_\star)=\inf_{f \in {W}} J(f).
\end{equation}
A solution of the minimization problem (\ref{s21111_1}) is called a quasi-solution to the inverse problem. Evidently, the existence of a quasi-solution depends on the compactness of the class of admissible fluxes $W$ and the continuity of the functional (\ref{s2111_1}).
\begin{definition}
A weak solution of the problem (\ref{s2-1})  is a function $u \in L^2\left( 0, \mathcal{T}; H_0^1(\Omega) \right) \cap  W_2^\beta \left( 0, \mathcal{T}; L^2(\Omega) \right)$   such that the following integral identity holds:
\begin{equation}\label{s2-2} 
\int \limits_{\Omega_t}{D_t}_ {0^+}^{\beta}u \, \, v \, dx \,dt    +  \int \limits_{\Omega_t} k\left(|\nabla u|^2\right)  \nabla u \cdot \nabla v \, dx \,dt   = \int \limits_{\Omega_t} F(x, t) v  \,dx  \,dt
+ \int_0^t  \int _{\Gamma_1} f_1(x, t) v  \, dx  \, dt +\int_0^t  \int _{\Gamma_2} f_2(x, t) v  \, dx  \, dt,    
\end{equation}
for each $ v \in L^2\left( 0, \mathcal{T}; H_0^1(\Omega) \right) \cap  W_2^\beta \left( 0, \mathcal{T}; L_2(\Omega) \right)$, where  $W_2^\beta(0,\mathcal{T}):=\bigg \{u \in L^2[0,\mathcal{T}]:{D_t}_ {0^+}^{\beta} u  \in L^2[0,\mathcal{T}] \, \text{and}  \, u(0)=0 \bigg  \}$  is the fractional Sobolev space of order $\beta$. \end{definition}
\begin{theorem}\label{theorem1}

Let $k \in \mathbb{K}$. Then the direct problem (\ref{s2-1}) has a unique weak solution $u \in L^2\left( 0, T; H_0^1(\Omega) \right) \cap  W_2^\beta \left( 0, T; L^2(\Omega) \right)$. Moreover,  there exist some constants  $c, C > 0$ such that

\begin{equation}\label{s2-3} 
\int_0^t {D_t}_ {0^+}^{\beta} \Vert u \Vert ^2 \,dt + c \, \Vert u\Vert_{H_0^1(\Omega_t)}^2  \, \leq   C\,\left[   \Vert F\Vert_{L_2(\Omega_t)}^2+  \Vert f_1\Vert_{L_2(\Gamma_1 ^ t)}^2 +  \Vert f_2\Vert_{L_2(\Gamma_2 ^ t)}^2\right].
\end{equation}
\end{theorem}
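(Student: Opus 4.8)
The plan is to combine a Faedo--Galerkin scheme with monotone-operator techniques: existence comes from the Galerkin approximation together with a limit passage exploiting the monotonicity built into (\ref{s1-4}), while uniqueness and the bound (\ref{s2-3}) follow from (\ref{s1-4}), the lower bound $k\ge c_0$ in (\ref{s1-3}), and the fundamental differential inequality for the Caputo derivative. Concretely, I would first fix an orthonormal basis $\{w_j\}$ of $L^2(\Omega)$ of smooth functions vanishing on $\Gamma_3\cup\Gamma_4$ (for instance eigenfunctions of $-\Delta$ with the mixed Dirichlet--Neumann conditions) and seek $u_N(t)=\sum_{j=1}^N c_j^N(t)w_j$ solving the projection of (\ref{s2-2}) onto $\mathrm{span}\{w_1,\dots,w_N\}$ with $c_j^N(0)=0$. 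This reduces the problem to a system of nonlinear Caputo fractional ODEs for the coefficients; rewriting it as a Volterra integral equation and applying a fixed-point argument gives local solvability, and the a priori bounds below extend the solution to all of $[0,\mathcal{T}]$.

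Next come the a priori estimates. Testing the Galerkin identity with $v=u_N$, I would use the Caputo differential inequality
\[
\int_\Omega ({D_t}_{0^+}^{\beta}u_N)\,u_N\dx \ge \tfrac12\,{D_t}_{0^+}^{\beta}\Vert u_N\Vert^2,
\]
together with the coercivity furnished by the lower bound $k\ge c_0$ in (\ref{s1-3}), namely $\int_\Omega k(|\nabla u_N|^2)|\nabla u_N|^2\dx \ge c_0\Vert\nabla u_N\Vert^2$. The right-hand side of (\ref{s2-2}) is controlled by Young's inequality, the trace theorem on $\Gamma_1,\Gamma_2$, and the Poincar\'e inequality (available since $\mathrm{meas}(\Gamma_3\cup\Gamma_4)\neq0$). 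After absorbing the gradient terms and integrating in time over $(0,t)$, a fractional Gr\"onwall inequality handles the remaining $\int_0^t\Vert u_N\Vert^2$ contribution and yields (\ref{s2-3}) uniformly in $N$; in particular $\{u_N\}$ is bounded in $L^2(0,\mathcal{T};H_0^1(\Omega))\cap W_2^\beta(0,\mathcal{T};L^2(\Omega))$.

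The main obstacle is the limit passage in the nonlinear flux $k(|\nabla u_N|^2)\nabla u_N$, since the weak convergence $u_N\weakto u$ in $L^2(0,\mathcal{T};H_0^1)$ does not by itself pass through the nonlinearity. I would exploit that the field $T\mapsto k(|T|^2)T$ is monotone by (\ref{s1-4}) and argue by Minty's device: the flux is bounded in $L^2(\Omega_\mathcal{T})$, hence converges weakly to some $\chi$, and combining the monotonicity inequality (tested against $u_N-w$ for arbitrary admissible $w$) with a fractional-in-time Aubin--Lions compactness result---giving strong convergence of $u_N$ in $L^2(\Omega_\mathcal{T})$ and the requisite $\limsup$ control on the energy---identifies $\chi=k(|\nabla u|^2)\nabla u$. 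Establishing this compactness for the space $W_2^\beta$ and controlling the energy limit of the fractional term is the delicate technical point of the argument.

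Finally, uniqueness follows from monotonicity. For two solutions $u_1,u_2$ I would subtract the identities (\ref{s2-2}), test with $w=u_1-u_2$, and use (\ref{s1-4}) together with the Caputo inequality to obtain
\[
\tfrac12\,{D_t}_{0^+}^{\beta}\Vert u_1-u_2\Vert^2 + \tilde{C}\,\Vert\nabla(u_1-u_2)\Vert^2 \le 0 .
\]
Applying the Riemann--Liouville integral $I^\beta$ and using the zero initial data gives $\Vert u_1-u_2\Vert^2\le0$, hence $u_1=u_2$, which in turn confirms that the weak limit selected in the previous step is the unique weak solution and completes the proof.
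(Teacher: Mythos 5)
Your energy estimate is essentially the paper's proof: the authors likewise take $v=u$ in (\ref{s2-2}), bound the elliptic term below by $c_0\Vert\nabla u\Vert^2$ via the first condition in (\ref{s1-3}), control the right-hand side with Cauchy--Schwarz, the trace theorem, Young and Poincar\'e, and treat the fractional term with the Alikhanov inequality (\ref{s2-7}); so that portion of your argument coincides with theirs (your final fractional Gr\"onwall step is in fact superfluous, since after Poincar\'e every lower-order term is a gradient term and is absorbed directly into the left-hand side, which is exactly how the paper closes the estimate). Where you genuinely diverge is on existence and uniqueness: the paper does not prove these at all, but simply cites \cite{stsuf}, whereas you sketch a self-contained Faedo--Galerkin construction with a Minty monotonicity argument for the limit passage and a monotonicity-based uniqueness proof. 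Your route is more informative and, if completed, would make the theorem self-contained; the price is that it carries the technical burden the citation avoids, and the two points you would still have to supply are nontrivial: the fractional-in-time Aubin--Lions compactness in $W_2^\beta\left(0,\mathcal{T};L^2(\Omega)\right)$ together with the $\limsup$ energy control needed to run Minty's device against a Caputo derivative, and the observation that the inequality (\ref{s1-4}) is stated for the scalar stress intensity $T=\vert\nabla u\vert$, so that using it as strong monotonicity of the vector field $p\mapsto k(\vert p\vert^2)p$ (as both your Minty step and your uniqueness step require) really rests on the second condition in (\ref{s1-3}); the paper glosses over the same point in the proof of Theorem \ref{conv}, but in a from-scratch existence proof it should be made explicit.
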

\begin{proof}
We refer the readers to \cite{stsuf} for the existence and the uniqueness of the solution. We prove (\ref{s2-3}) here. Let  $u$ be a weak solution of (\ref{s2-1}) and let us take $v = u$  in the weak formulation (\ref{s2-2}). We then have:
\begin{equation}\label{s2-4} 
 \int \limits_{\Omega_t}{D_t}_ {0^+}^{\beta}u \, \, u \, dx \,dt   +  \int \limits_{\Omega_t} k\left(|\nabla u|^2\right)  |\nabla u|^2  \, dx \,dt   = \int \limits_{\Omega_t} F(x, t) u  \,dx  \,dt 
 + \int_0^t  \int _{\Gamma_1} f_1(x, t) u dx dt +\int_0^t  \int _{\Gamma_2} f_2(x, t) u dx dt.   
\end{equation}
By the first assumption in  (\ref{s1-3}), it follows that
\begin{equation}\label{s2-5}
c_0 \int \limits_{\Omega_t}   \vert \nabla u\vert^2 \, dx \, dt  \leq  \int \limits_{\Omega_t} k\left(|\nabla u|^2\right)  \vert \nabla u\vert^2 \, dx \, dt .
\end{equation}
Moreover, using Cauchy-Schwarz (CS), Young (Y), Poincar\'e (P) inequalities and trace theorem (T), we have:
\begin{equation}\label{s2-6}
\begin{split}
&\int_0^t  \int_{\Omega} F(x, t) u  \,dx  \,dt + \int_0^t  \int _{\Gamma_1} f_1(x, t) u\, dx\, dt +\int_0^t  \int _{\Gamma_2} f_2(x, t) u\, dx\, dt  \\
& \overset{\text{(CS)}}{\leq}  c_4\int _0^t \Vert F \Vert  \Vert u \Vert \,dt +  c_5\int _0^t \Vert f_1 \Vert_{L_2 {(\Gamma_1})}  \Vert u \Vert_{L_2 {(\Gamma_1})} \,dt + c_6\int _0^t \Vert f_2 \Vert_{L_2 {(\Gamma_2})} \Vert u \Vert _{L_2 {(\Gamma_2})} \,dt \\
&  \leq c_4\int _0^t \Vert F \Vert  \Vert u \Vert \,dt + c_5 \int _0^t \Vert f_1 \Vert_{L_2 {(\Gamma_1})}  \Vert u \Vert_{L_2 {(\partial \Omega})} \,dt+ c_6\int _0^t \Vert f_2 \Vert_{L_2 {(\Gamma_2})} \Vert u \Vert _{L_2 {(\partial \Omega})} \,dt \\
&   \overset{\text{(T)}} \leq c_4\int _0^t \Vert F \Vert  \Vert u \Vert \,dt + c_7 \int _0^t \Vert f_1 \Vert_{L_2 {(\Gamma_1})}  \Vert u \Vert_{H^1 {( \Omega})} \,dt+ c_8\int _0^t \Vert f_2 \Vert_{L_2 {(\Gamma_2})} \Vert u \Vert _{H^1 {(\Omega})} \,dt \\
&   \overset{\text{(Y)}} \leq c_9\int _0^t ( \left\Vert F \Vert^2 + \Vert u \Vert^2 \right) \,dt + c_{10} \int _0^t  \left ( \Vert f_1 \Vert^2_{L_2 {(\Gamma_1})} + \Vert u \Vert^2_{H^1 {( \Omega})}\right)  \,dt + c_{11}\int _0^t  \left ( \Vert f_2 \Vert^2_{L_2 {(\Gamma_2})} + \Vert u \Vert^2_{H^1 {( \Omega})}\right)  \,dt  \\
&   \overset{\text{(P)}} \leq c_{12}\int _0^t (\left \Vert F \Vert^2 +  \Vert \nabla u \Vert^2 \right ) \,dt + c_{13} \int _0^t \left ( \Vert f_1 \Vert^2_{L_2 {(\Gamma_1})} + \Vert \nabla u\Vert ^2 \right) \,dt + c_{14}\int _0^t \left( \Vert f_2 \Vert_{L_2 {(\Gamma_2})} + \Vert \nabla u\Vert ^2 \right) \,dt \\
&  \leq c_{15} \Vert \nabla u\Vert ^2 + c_{16} \left ( \Vert F \Vert _{L_2(\Omega_t)}^2 + \Vert f_1 \Vert ^2_{L_2 {( \Gamma_1^t})} + \Vert f_2 \Vert ^2_{L_2 {( \Gamma_2^t})} \right ) .
\end{split}
\end{equation}
Now, employing the Alikhanov inequality \cite{Ali} we have:
\begin{equation}\label{s2-7}
\int \limits_{\Omega_t}  {D_t}_ {0^+}^{\beta}u  \, u \, dx \geq \frac{1}{2} \int_0^t {D_t}_ {0^+}^{\beta} \Vert u \Vert ^2 \,dt .
\end{equation}
Using (\ref{s2-5}), (\ref{s2-6}) and  (\ref{s2-7}) in (\ref{s2-4}) we get
\begin{equation*}
\int_0^t {D_t}_ {0^+}^{\beta} \Vert u \Vert ^2 \,dt  + c  \Vert \nabla u\Vert ^2 \leq C \left ( \Vert F \Vert_{L_2(\Omega_t)}^2  + \Vert f_1 \Vert ^2_{L_2 {( \Gamma_1^t})} + \Vert f_1 \Vert ^2_{L_2 {( \Gamma_2^t})} \right ) .
\end{equation*}
This completes the proof.
\end{proof}

\begin{theorem}\label{conv}
Suppose that a sequence of flux function $f_m := (f_{1m}, f_{2m}) \in L_2(\Gamma_1 ^ \mathcal{T}) \times L_2(\Gamma_2 ^ \mathcal{T}) $ converges pointwise to a function $f = (f_1, f_2) \in L_2(\Gamma_1 ^ \mathcal{T}) \times L_2(\Gamma_2 ^ \mathcal{T}) $. Then the sequence of solutions $u_m := u(x, t; f_m)$ converges to the solution $u := u(x, t; f)  \in L^2\left( 0, T; H_0^1(\Omega) \right) \cap  W_2^\beta \left( 0, T; L^2(\Omega) \right) $, where $u := u(x, t; f) $ denotes the solution of the direct problem (\ref{s2-1}) for a given  flux $f$.
\end{theorem}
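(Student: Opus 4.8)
The plan is to control the difference $w_m := u_m - u$ by an energy argument that runs exactly parallel to the proof of Theorem \ref{theorem1}, the only genuinely new ingredients being the monotonicity inequality (\ref{s1-4}), which plays for the nonlinear flux the role that (\ref{s2-5}) played for $u$ alone, and a dominated--convergence step that upgrades the pointwise convergence of the data to $L_2$ convergence. Throughout I would work on the cylinder $\Om_t$ for $t\in(0,\mathcal{T})$ and keep the constants schematic, as in (\ref{s2-6}).

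First I would write the weak formulation (\ref{s2-2}) once for $u_m$ (with flux $f_m$ and source $F$) and once for $u$ (with flux $f$ and the same source $F$), subtract, and test with the admissible function $v=w_m\in L^2(0,\mathcal{T};H_0^1(\Om))\cap W_2^\beta(0,\mathcal{T};L^2(\Om))$. Since $F$ is common to both problems it cancels, and one is left with
\begin{equation*}
\int_{\Om_t}{D_t}_{0^+}^{\beta} w_m\,w_m\,dx\,dt + \int_{\Om_t}\big[k(|\nabla u_m|^2)\nabla u_m - k(|\nabla u|^2)\nabla u\big]\cdot\nabla w_m\,dx\,dt = \int_0^t\!\int_{\Gamma_1}(f_{1m}-f_1)\,w_m\,dx\,dt + \int_0^t\!\int_{\Gamma_2}(f_{2m}-f_2)\,w_m\,dx\,dt.
\end{equation*}

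Next I would estimate the three terms separately. The Alikhanov inequality (\ref{s2-7}) turns the first term into $\tfrac12\int_0^t{D_t}_{0^+}^{\beta}\Vert w_m\Vert^2\,dt$. The nonlinear second term is precisely where (\ref{s1-4}) enters: taking $T=\nabla u_m$ and $\tilde T=\nabla u$ bounds it below by $\tilde C\Vert\nabla w_m\Vert_{L_2(\Om_t)}^2$, providing the coercive term. The boundary terms on the right are handled verbatim as in (\ref{s2-6}) by Cauchy--Schwarz, the trace theorem, Young's and Poincar\'e inequalities, now with the data factors $f_{im}-f_i$; choosing the Young constants small enough lets me absorb the resulting $\Vert\nabla w_m\Vert^2$ contributions into the coercive term. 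This yields the stability estimate
\begin{equation*}
\int_0^t {D_t}_{0^+}^{\beta}\Vert w_m\Vert^2\,dt + c\,\Vert \nabla w_m\Vert_{L_2(\Om_t)}^2 \le C\big(\Vert f_{1m}-f_1\Vert_{L_2(\Gamma_1^t)}^2 + \Vert f_{2m}-f_2\Vert_{L_2(\Gamma_2^t)}^2\big).
\end{equation*}
Because $w_m(x,0)=0$, interchanging the order of integration shows that $\int_0^t{D_t}_{0^+}^{\beta}\Vert w_m\Vert^2\,ds$ equals a positive multiple of $\int_0^t(t-s)^{-\beta}\Vert w_m(s)\Vert^2\,ds\ge 0$, so this term may be discarded, leaving $c\,\Vert\nabla w_m\Vert_{L_2(\Om_t)}^2\le C(\cdots)$; Poincar\'e's inequality then also controls $\Vert w_m\Vert_{L^2(0,t;H_0^1(\Om))}$, and hence, via the trace theorem, the boundary quantities that enter the cost functional (\ref{s2111_1}).

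It remains to show the right-hand side tends to $0$, and this is the one new subtlety. Pointwise convergence $f_{im}\to f_i$ does not by itself give $L_2$ convergence, but on the admissible class the uniform bounds (\ref{s21_1}) make the integrands $|f_{im}-f_i|^2$ dominated by a fixed constant, which is integrable over the bounded sets $\Gamma_i^{\mathcal{T}}$; the dominated convergence theorem then delivers $\Vert f_{im}-f_i\Vert_{L_2(\Gamma_i^{\mathcal{T}})}\to 0$. Letting $m\to\infty$ in the stability estimate gives $u_m\to u$ in $L^2(0,\mathcal{T};H_0^1(\Om))$ together with vanishing of the weighted fractional-time quantity above, which is the assertion. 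I expect the main obstacle to be the nonlinear flux term, and the whole argument hinges on being able to replace coercivity of $k$ by the full monotonicity (\ref{s1-4}); the secondary point requiring care is supplying the dominating function for the data, for which the boundedness built into the admissible class $W$ is exactly what is needed.
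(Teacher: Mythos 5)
Your proposal follows essentially the same route as the paper: subtract the two weak formulations, test with $u_m-u$, invoke the monotonicity inequality (\ref{s1-4}) for the nonlinear flux term and the Alikhanov inequality (\ref{s2-7}) for the fractional time term, and pass to the limit in the resulting estimate. Your version is in fact slightly more complete than the paper's, since you absorb the boundary terms via trace/Young/Poincar\'e to obtain a genuine stability bound in terms of $\Vert f_{im}-f_i\Vert_{L_2(\Gamma_i^t)}$ alone, and you explicitly supply the dominated-convergence step (using the bounds defining the admissible class $W$) that upgrades the pointwise convergence of the fluxes to $L_2$ convergence --- a step the paper's proof leaves implicit when it ``passes the limit'' in (\ref{s2-11}).
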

\begin{proof}
By Theorem (\ref{theorem1}), the solutions $u_m$ and $u$ are well-defined. By (\ref{s2-2}), we have:
\begin{equation}\label{s2-8} 
\int \limits_{\Omega_t}{D_t}_ {0^+}^{\beta}u_m \, \, v \, dx \,dt   +  \int \limits_{\Omega_t} k\left(|\nabla u_m|^2\right)  \nabla u_m \cdot \nabla v \, dx \,dt   = \int \limits_{\Omega_t} F(x, t) v  \,dx  \,dt 
 + \int_0^t  \int _{\Gamma_1} f_1(x, t) v \, dx \, dt +\int_0^t  \int _{\Gamma_2} f_2(x, t) v \, dx \, dt .
\end{equation}
\begin{equation}\label{s2-9} 
\int \limits_{\Omega_t}{D_t}_ {0^+}^{\beta}u \, \, v \, dx \,dt   +  \int \limits_{\Omega_t} k\left(|\nabla u|^2\right)  \nabla u \cdot \nabla v \, dx \,dt   = \int \limits_{\Omega_t} F(x, t) v  \,dx  \,dt 
+ \int_0^t  \int _{\Gamma_1} f_1(x, t) v \, dx \, dt +\int_0^t  \int _{\Gamma_2} f_2(x, t) v \, dx \, dt.
\end{equation}
By taking $v := u_m - u$, we have from (\ref{s2-8}) and (\ref{s2-9}) that
\begin{equation}\label{s2-10} 
\begin{split}
&\int \limits_{\Omega_t}{D_t}_ {0^+}^{\beta} (u_m - u) \, (u_m - u) dx dt  +  \int \limits_{\Omega_t} \left(k\left(|\nabla u_m|^2\right) \nabla u_m - k\left(|\nabla u|^2\right)  \nabla u \right) \cdot \nabla(u_m - u)dx dt \\
 &=  \int_0^t  \int _{\Gamma_1} (f_{1m} - f_1) (u_m - u) \, dx \, dt + \int_0^t  \int _{\Gamma_2} (f_{2m} - f_2) (u_m - u) \, dx \, dt ,\\
 & \overset{\text{(CS)}}{\leq} \Vert f_{1m} - f_1\Vert_{L_2 {(\Gamma_1^t})} \Vert u_m - u \Vert_{L_2 {(\Gamma_1^t})}  +  \Vert f_{2m} - f_2\Vert_{L_2 {(\Gamma_2^t})} \Vert u_m - u \Vert_{L_2 {(\Gamma_2^t})}  .
 \end{split}
\end{equation}
By using  (\ref{s1-4}) and (\ref{s2-7}) in (\ref{s2-10}) , we deduce that
\begin{equation}\label{s2-11} 
\frac{1}{2} \int_0^t {D_t}_ {0^+}^{\beta} \Vert u_m - u \Vert^2 \,dt   +  \tilde{C} \Vert \nabla (u_m - u) \Vert^2 \leq \Vert f_{1m} - f_1\Vert_{L_2 {(\Gamma_1^t})} \Vert u_m - u \Vert_{L_2 {(\Gamma_1^t})} +  \Vert f_{2m} - f_2\Vert_{L_2 {(\Gamma_2^t})} \Vert u_m - u \Vert_{L_2 {(\Gamma_2^t})}.
\end{equation}
If we pass the limit when $m \to \infty $ in (\ref{s2-11}), it is easy to see that
\begin{equation*}
 \lim_{m \to \infty} {D_t}_ {0^+}^{\beta} \Vert u_m - u \Vert ^2 = 0, \; \; \text{ and } \; \lim_{m \to \infty}  \big \Vert \nabla \left( u_m - u \right) \big \Vert  = 0.
\end{equation*}
\end{proof}
\begin{remark}
One can easily prove that, by the Theorem (\ref{conv}),  if $f_m := (f_{1m}, f_{2m}) \in L_2(\Gamma_1 ^ \mathcal{T}) \times L_2(\Gamma_2 ^ \mathcal{T}) $ converges pointwise to a function $f = (f_1, f_2) \in L_2(\Gamma_1 ^ \mathcal{T}) \times L_2(\Gamma_2 ^ \mathcal{T}) $, then $J(f_m)$ converges to  $J(f)$. By the Weierstrass existence theorem, the set of solutions
\begin{equation*}
W_\star:= \{f \in W : J(f_\star) = J_\star =\inf J(f) \}
\end{equation*}
is not an empty set. 
\end{remark}
\section{Adjoint problem approach for linearized inverse problem and Fr\'echet differentiability of the cost functional}\label{sec3}
In this section, we linearize the nonlinear problem (\ref{s2-1}) using adjoint problem technique.  Let $u^{(n)} (x, t)$ be the solution of the following linearized problem: 
\begin{eqnarray}\label{s3-1}
\left \{ \begin{array}{l}
\displaystyle  {D_t}_ {0^+}^{\beta} u^{(n)} = \nabla \cdot \big(k(\vert \nabla u^{(n-1)} \vert ^2) \nabla u^{(n)} \big) + F(x, t),~(x,t)\in \Omega_\mathcal{T}, \\
- k(\vert \nabla u^{(n-1)} \vert ^2) \frac {\partial u^{(n)}}{\partial n} = f_1(x, t), ~(x,t)\in \Gamma_1 ^ \mathcal{T},  \\
- k(\vert \nabla u^{(n-1)} \vert ^2) \frac {\partial u^{(n)}}{\partial n} = f_2(x, t), ~(x,t)\in \Gamma_2 ^ \mathcal{T}, \\
u^{(n)}(x,t) = 0, ~(x,t)\in \Gamma_3^ \mathcal{T} \cup \Gamma_4^ \mathcal{T}, \\
u^{(n)}(x,0) = 0, \quad x \in \Omega,\\
\end{array} \right.
\end{eqnarray}
where $u^{(0)}(x,t) \equiv 0 $ is a chosen initial function. By following the same idea in the Theorem (\ref{conv}), we conclude that the sequence $u^{(n)} (x, t)$ is bounded in $L^2\left( 0, \mathcal{T}; H_0^1(\Omega) \right) $. Then there is a subsequence (still denoted by $u^{(n)} (x, t)$)   such that $u^{(n)} \to \tilde{u}, \, n \to \infty$ for some function $\tilde{u}$.
This implies that 
\begin{equation}\label{convergen}
\Vert u^{(n)} -  \tilde{u}\Vert_{L^2\left( 0, \mathcal{T}; H_0^1(\Omega) \right)} \to 0, n \to \infty.
\end{equation}
Then the linearized problem (\ref{s3-1})  is transformed into the following linear problem :
\begin{eqnarray}\label{s3-2}
\left \{ \begin{array}{l}
\displaystyle  {D_t}_ {0^+}^{\beta} \tilde{u} = \nabla \cdot \big(\tilde{k} (x, t) \nabla \tilde{u} \big) + F(x, t),~(x,t)\in \Omega_\mathcal{T}, \\
- \tilde{k} (x, t) \frac {\partial \tilde{u}}{\partial n} = f_1(x, t), ~(x,t)\in \Gamma_1 ^ \mathcal{T},  \\
- \tilde{k} (x, t) \frac {\partial \tilde{u}}{\partial n} = f_2(x, t), ~(x,t)\in \Gamma_2 ^ \mathcal{T}, \\
\tilde{u}(x,t) = 0, ~(x,t)\in \Gamma_3^ \mathcal{T} \cup \Gamma_4^ \mathcal{T}, \\
\tilde{u}(x,0) = 0, \quad x \in \Omega,\\
\end{array} \right.
\end{eqnarray}
where $ \tilde{k} (x,t) := k\big(\vert \nabla u^{(n-1)}\big)$. Then we define the additional data (\ref{s211_1}) and the cost functional (\ref{s2111_1}) for the linear problem (\ref{s3-2}) as follows:
 \begin{eqnarray}\label{s3-3}
 \left \{ \begin{array}{l}
h_1(x, t) = \tilde{u}(x,t),  \, (x, t) \in \Gamma_1 ^ \mathcal{T},  \\
h_2(x, t) = \tilde{u}(x, t),  \, (x, t) \in \Gamma_2 ^ \mathcal{T}. 
\end{array} \right.
\end{eqnarray}
\begin{equation}\label{s3-4}
\tilde{J}(f) = \int_0^\mathcal{T} \int \limits_{\Gamma_1}   \bigg \vert  \tilde{u} (x, t; f)  -  h_1(x, t) \bigg\vert^2 \, dx \, dt + \int_0^\mathcal{T} \int \limits_{\Gamma_2}   \bigg \vert   \tilde{u} (x, t; f)  -  h_2(x, t) \bigg\vert^2 \, dx \, dt.
\end{equation}
We calculate the first variation of the cost functional as follows. Let $\delta \tilde{u} := \tilde{u} (x, t; f + \delta f) - \tilde{u} (x, t; f ) $. Then we have: 
\begin{equation}\label{s3-5}
\begin{split}
\delta\tilde{J}(f) &= \tilde{J}(f + \delta f) - \tilde{J}(f)  = \int_0^\mathcal{T} \int \limits_{\Gamma_1}   \left ( \tilde{u} (x, t; f + \delta f)- h_1(x, t) \right ) ^2 dxdt - \int_0^\mathcal{T} \int \limits_{\Gamma_1} \left (   \tilde{u} (x, t; f)- h_1(x, t) \right )^2 dxdt.\\
& + \int_0^\mathcal{T} \int \limits_{\Gamma_2}   \left ( \tilde{u} (x, t; f + \delta f)- h_2(x, t) \right ) ^2 dx dt - \int_0^\mathcal{T} \int \limits_{\Gamma_2} \left (   \tilde{u} (x, t; f)- h_2(x, t) \right )^2 dxdt.\\
&= \int_0^\mathcal{T} \int \limits_{\Gamma_1}   \bigg [ (\tilde{u} (x, t; f + \delta f) - \tilde{u} (x, t; f ))  \underbrace { (\tilde{u}(x, t; f + \delta f) + \tilde{u} (x, t; f ))}_
{\delta  \tilde{u} + 2 \tilde{u}(x, t: f)  }+2h_1(\tilde{u} (x, t; f ) - \tilde{u} (x, t; f + \delta f)) \bigg ] dxdt\\
 &+ \int_0^\mathcal{T} \int \limits_{\Gamma_2}   \bigg [ (\tilde{u} (x, t; f + \delta f) - \tilde{u} (x, t; f ))   \underbrace { (\tilde{u}(x, t; f + \delta f) + \tilde{u} (x, t; f ))}_
{\delta  \tilde{u} + 2 \tilde{u}(x, t: f)  }+2h_2(\tilde{u} (x, t; f ) - \tilde{u} (x, t; f + \delta f)) \bigg ] dx dt\\
&= 2\int_0^\mathcal{T} \int \limits_{\Gamma_1} \delta{ \tilde{u}} \left (  (\tilde{u}(x, t ; f) - h_1(x, t) \right)dx dt + \int_0^\mathcal{T} \int \limits_{\Gamma_1} (\delta  \tilde{u})^2 dx dt\\
&+2\int_0^\mathcal{T} \int \limits_{\Gamma_2} \delta{ \tilde{u}} \left (  (\tilde{u}(x, t ; f) - h_2(x, t) \right)dxdt + \int_0^\mathcal{T} \int \limits_{\Gamma_2} (\delta  \tilde{u})^2 dx dt,\\
\end{split}
\end{equation}
where $\delta \tilde{u}$ is the solution of the following problem: 
\begin{eqnarray}\label{s3-6}
\displaystyle
\left \{ \begin{array}{l}
  {D_t}_ {0^+}^{\beta} \delta \tilde{u} = \nabla \cdot \big(\tilde{k} (x, t) \nabla \delta \tilde{u}\big), ~(x,t)\in \Omega_\mathcal{T}, \\
- \tilde{k} (x, t) \frac {\partial \delta \tilde{u}}{\partial n} = \delta f_1(x, t), ~(x,t)\in \Gamma_1 ^ \mathcal{T},  \\
- \tilde{k} (x, t) \frac {\partial \delta \tilde{u}}{\partial n} = \delta f_2(x, t), ~(x,t)\in \Gamma_2 ^ \mathcal{T}, \\
\tilde{u}(x,t) = 0, ~(x,t)\in \Gamma_3^ \mathcal{T} \cup \Gamma_4^ \mathcal{T}, \\
\tilde{u}(x,0) = 0, \quad x \in \Omega.\\
\end{array} \right.
\end{eqnarray}
\begin{theorem}
Let $\tilde{\varphi}$ be the solution of the following well-posed backward parabolic problem:
\begin{eqnarray}\label{s3-7}
\left \{ \begin{array}{l}
\displaystyle  {D_t}_ {\mathcal{T}^-}^{\beta} {\tilde{\varphi}} = \nabla \cdot \big(\tilde{k} (x, t) \nabla \tilde{\varphi} \big),~(x,t)\in \Omega_\mathcal{T}, \\
- \tilde{k} (x, t) \frac {\partial \tilde{\varphi}}{\partial n} = 2 \left (\tilde{u}(x, t; f) - f_1(x, t) \right ), ~(x,t) \in \Gamma_1 ^ \mathcal{T},  \\
- \tilde{k} (x, t) \frac {\partial \tilde{\varphi}}{\partial n} = 2 \left (\tilde{u}(x, t; f) - f_2(x, t) \right ), ~(x,t)\in \Gamma_2 ^ \mathcal{T}, \\
\tilde{\varphi} (x,t) = 0, ~(x,t)\in \Gamma_3^ \mathcal{T} \cup \Gamma_4^ \mathcal{T}, \\
\tilde{\varphi} (x,\mathcal{T}) = 0, \quad x \in \Omega,\\
\end{array} \right.
\end{eqnarray}
\end{theorem}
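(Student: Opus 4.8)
The conclusion to be proved is that, with $\tilde{\varphi}$ defined through the backward (adjoint) problem (\ref{s3-7}), the cost functional $\tilde{J}$ is Fr\'echet differentiable and its gradient is the trace of the adjoint state, i.e. $\tilde{J}'(f)=\big(\tilde{\varphi}\big|_{\Gamma_1^\mathcal{T}},\,\tilde{\varphi}\big|_{\Gamma_2^\mathcal{T}}\big)$. The plan is to start from the exact expansion of the first variation already obtained in (\ref{s3-5}), which writes $\delta\tilde{J}(f)$ as the sum of a term linear in $\delta\tilde{u}$,
\begin{equation*}
2\int_0^\mathcal{T}\int_{\Gamma_1}\delta\tilde{u}\,(\tilde{u}-h_1)\dx\dt+2\int_0^\mathcal{T}\int_{\Gamma_2}\delta\tilde{u}\,(\tilde{u}-h_2)\dx\dt,
\end{equation*}
and a quadratic remainder $\int_0^\mathcal{T}\int_{\Gamma_1}(\delta\tilde{u})^2\dx\dt+\int_0^\mathcal{T}\int_{\Gamma_2}(\delta\tilde{u})^2\dx\dt$, where $\delta\tilde{u}$ solves the sensitivity problem (\ref{s3-6}). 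The two things to be shown are then: that the linear term equals an explicit bounded linear functional of $\delta f$ carried by $\tilde{\varphi}$, and that the remainder is $o(\Vert\delta f\Vert)$ in the product norm of $L_2(\Gamma_1^\mathcal{T})\times L_2(\Gamma_2^\mathcal{T})$.

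For the linear term I would pair the sensitivity equation (\ref{s3-6}) with $\tilde{\varphi}$ and the adjoint equation (\ref{s3-7}) with $\delta\tilde{u}$, integrate each over $\Omega_\mathcal{T}$, and subtract. In time, the fractional integration by parts formula (\ref{s1-2}) moves the left-sided derivative ${D_t}_ {0^+}^{\beta}$ off $\delta\tilde{u}$ and onto the right-sided derivative ${D_t}_ {\mathcal{T}^-}^{\beta}$ of $\tilde{\varphi}$; this is precisely why the adjoint problem is posed backward with the terminal condition $\tilde{\varphi}(x,\mathcal{T})=0$ while the sensitivity problem carries $\delta\tilde{u}(x,0)=0$. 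In space, Green's identity for the shared operator $\nabla\cdot(\tilde{k}\,\nabla\,\cdot)$ produces boundary integrals over $\Gamma$: the homogeneous Dirichlet conditions on $\Gamma_3^\mathcal{T}\cup\Gamma_4^\mathcal{T}$ kill those contributions, whereas on $\Gamma_1^\mathcal{T}$ and $\Gamma_2^\mathcal{T}$ the flux conditions let me substitute $-\tilde{k}\,\partial_n\delta\tilde{u}=\delta f_i$ and $-\tilde{k}\,\partial_n\tilde{\varphi}=2(\tilde{u}-h_i)$. The symmetric gradient pairings cancel, and matching the surviving terms yields the identity
\begin{equation*}
2\int_0^\mathcal{T}\int_{\Gamma_1}\delta\tilde{u}\,(\tilde{u}-h_1)\dx\dt+2\int_0^\mathcal{T}\int_{\Gamma_2}\delta\tilde{u}\,(\tilde{u}-h_2)\dx\dt=\int_0^\mathcal{T}\int_{\Gamma_1}\tilde{\varphi}\,\delta f_1\dx\dt+\int_0^\mathcal{T}\int_{\Gamma_2}\tilde{\varphi}\,\delta f_2\dx\dt,
\end{equation*}
which represents the linear part of $\delta\tilde{J}$ as $\langle(\tilde{\varphi}|_{\Gamma_1^\mathcal{T}},\tilde{\varphi}|_{\Gamma_2^\mathcal{T}}),\delta f\rangle$.

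For the remainder I would use the continuous-dependence bound of Theorem \ref{theorem1}: since $\delta\tilde{u}$ solves a problem of the same class forced only by $\delta f$, estimate (\ref{s2-3}) gives $\Vert\delta\tilde{u}\Vert_{H_0^1(\Omega_\mathcal{T})}^2\le C\big(\Vert\delta f_1\Vert_{L_2(\Gamma_1^\mathcal{T})}^2+\Vert\delta f_2\Vert_{L_2(\Gamma_2^\mathcal{T})}^2\big)$. Controlling the boundary $L_2$ norms of $\delta\tilde{u}$ by its $H^1$ norm through the trace theorem then shows the remainder is $O(\Vert\delta f\Vert^2)=o(\Vert\delta f\Vert)$. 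Combining this with the pairing identity gives $\delta\tilde{J}(f)=\langle(\tilde{\varphi}|_{\Gamma_1^\mathcal{T}},\tilde{\varphi}|_{\Gamma_2^\mathcal{T}}),\delta f\rangle+o(\Vert\delta f\Vert)$, which is exactly Fr\'echet differentiability together with the asserted gradient formula.

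The step I expect to be the main obstacle is the rigorous justification of the time-fractional integration by parts (\ref{s1-2}): the formula is stated under the hypotheses that the Caputo and Riemann--Liouville derivatives agree at the endpoints and that the integrands are regular enough for the identity to hold, so one must check that the compatibility conditions $\delta\tilde{u}(x,0)=0$ and $\tilde{\varphi}(x,\mathcal{T})=0$ indeed reconcile the two fractional derivatives and that the weak solutions delivered by Theorem \ref{theorem1} have sufficient regularity. Establishing the well-posedness of the backward adjoint problem is part of this: the substitution $s=\mathcal{T}-t$ converts (\ref{s3-7}) into a forward problem of the same type as (\ref{s3-2}), to which Theorem \ref{theorem1} applies. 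Once the derivative swap is secured at the level of the weak formulations, the spatial Green's identity and the boundary bookkeeping that produce the pairing identity are routine.
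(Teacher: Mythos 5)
Your proposal is correct and follows essentially the same route as the paper: multiply the sensitivity problem (\ref{s3-6}) by $\tilde{\varphi}$, transfer the left-sided Caputo derivative onto $\tilde{\varphi}$ via the fractional integration by parts formula (\ref{s1-2}) (legitimized by $\delta\tilde{u}(x,0)=0$ and $\tilde{\varphi}(x,\mathcal{T})=0$), apply Green's identity in space so the volume gradient terms cancel, and read off the identity (\ref{s3-8}) from the boundary conditions of (\ref{s3-6}) and (\ref{s3-7}). Your additional treatment of the quadratic remainder via the stability estimate of Theorem \ref{theorem1} and the trace theorem is a self-contained substitute for the paper's appeal to the bound (\ref{s3-14}) quoted from \cite{burhan}, but it does not change the argument in any essential way.
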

\noindent where $\tilde{u}(x, t; f)$ and $\tilde{u}(x, t; f + \delta f)$ are the solutions of the problem (\ref{s3-2}). Then the following integral identity holds:
\begin{equation}\label{s3-8}
\begin{split}
& 2 \int _0^\mathcal{T} \int_{\Gamma_1} \left (\tilde{u}(x, t; f) - h_1(x, t)  \right )  \delta \tilde{u} dx dt + 2 \int _0^\mathcal{T} \int_{\Gamma_2} \left (\tilde{u}(x, t; f) - h_2(x, t)  \right )  \delta \tilde{u} dx dt  \\
& = \int _0^\mathcal{T} \int_{\Gamma_1} \tilde{\varphi}(x,t)  \delta f_1 dx dt +  \int _0^\mathcal{T} \int_{\Gamma_2} \tilde{\varphi}(x,t)  \delta f_2 dx dt . \\
\end{split}
\end{equation}
\begin{proof}
We refer the readers to  \cite{Mis6} for the well-posedness of the problem (\ref{s3-7}).  We start the proof by multiplying the equation in (\ref{s3-6}) by an arbitrary function $\tilde{\varphi} (x, t)$ and integrating the resulting equation over $\Omega_\mathcal{T}$. Then we have:
\begin{equation}\label{s3-9}
\int_{\Omega_\mathcal{T}} {D_t}_ {0^+}^{\beta} \delta \tilde{u} \, \, \tilde{\varphi} (x, t) \,   dx\, dt  = \int_{\Omega_\mathcal{T}} \nabla. \big(\tilde{k} (x, t) \nabla \delta \tilde{u}\big) \,\tilde{\varphi} (x, t) \, dx\, dt .
\end{equation}
First, we apply the integration by parts formula to the left side in (\ref{s3-9}). By (\ref{s1-2}), $\tilde{\varphi} (x,\mathcal{T}) = 0$ and $\delta \tilde{u}(x, 0) = 0 $ we deduce that:
\begin{equation}\label{s3-10}
\begin{split}
&\int_{\Omega_\mathcal{T}} {D_t}_ {0^+}^{\beta} \delta \tilde{u} \, \, \tilde{\varphi} (x, t) \,   dx\, dt  = \int_{\Omega_\mathcal{T}}\delta \tilde{u}  \underbrace {{D_t}_ {\mathcal{T}-}^{\beta} \tilde{\varphi} }_{\nabla. \big(\tilde{k} (x, t) \nabla \tilde{\varphi} \big)} \,   dx\, dt  \\
& = \int_0^\mathcal{T} \int_0^1 \tilde{k} \tilde{\varphi}_{x_1}  \delta \tilde{u} \vert _0^1 \, dx_2 \, dt + \int_0^\mathcal{T} \int_0^1 \tilde{k} \tilde{\varphi}_{x_2}  \delta \tilde{u} \vert _0^1 \,  dx_1\,dt- \int_{\Omega_\mathcal{T}} \tilde{k} \nabla  \tilde{\varphi} \nabla \delta \tilde{u} \, dx \, dt  .\\
\end{split}
\end{equation}
Next, we apply the integration by parts formula to the right side in (\ref{s3-9}). Then, we have:
\begin{equation}\label{s3-11}
\int_{\Omega_\mathcal{T}} \nabla \cdot \big(\tilde{k} (x, t) \nabla \delta \tilde{u}\big) \,\tilde{\varphi} (x, t) \, dx\, dt =  \int_0^\mathcal{T} \int_0^1 \tilde{k} \delta \tilde{u}_{x_1} \tilde{\varphi}  \vert _0^1 \, dx_2 \, dt + \int_0^\mathcal{T} \int_0^1 \tilde{k} \delta \tilde{u}_{x_2} \tilde{\varphi}  \vert _0^1 \, dx_1 \, dt  - \int_{\Omega_\mathcal{T}} \tilde{k} \nabla  \tilde{\varphi} \nabla \delta \tilde{u} \, dx \, dt  .\\
\end{equation}
By (\ref{s3-9}), (\ref{s3-10}) and (\ref{s3-11}), we have:
\begin{equation}\label{s3-12}
\int_0^\mathcal{T} \int_0^1 \tilde{k} \tilde{\varphi}_{x_1}  \delta \tilde{u} \vert _0^1 \, dx_2 \, dt + \int_0^\mathcal{T} \int_0^1 \tilde{k} \tilde{\varphi}_{x_2}  \delta \tilde{u} \vert _0^1 \,  dx_1\,dt  =\int_0^\mathcal{T} \int_0^1 \tilde{k} \delta \tilde{u}_{x_1} \tilde{\varphi}  \vert _0^1 \, dx_2 \, dt + \int_0^\mathcal{T} \int_0^1 \tilde{k} \delta \tilde{u}_{x_2} \tilde{\varphi}  \vert _0^1 \, dx_1 \, dt .
\end{equation}
If we take the initial and boundary conditions in (\ref{s3-2}) and (\ref{s3-7}) into account, we complete the proof.
\end{proof}

\noindent Next we prove that $\tilde{J}(f)$ is Fr\'{e}chet differentiable. By using (\ref{s3-8}) in (\ref{s3-5}), we deduce that  
\begin{equation}\label{s3-13}
\delta\tilde{J}(f) = \int _0^\mathcal{T} \int_{\Gamma_1} \tilde{\varphi}(x,t)  \delta f_1 dx dt +  \int _0^\mathcal{T} \int_{\Gamma_2} \tilde{\varphi}(x,t)  \delta f_2 dx dt + \int_0^\mathcal{T} \int \limits_{\Gamma_1} (\delta  \tilde{u})^2 dx dt + \int_0^\mathcal{T} \int \limits_{\Gamma_2} (\delta  \tilde{u})^2 dx dt.
\end{equation}
Then by definition of Fr\'{e}chet differentiability
\begin{equation*}
 \delta\tilde{J}(f) = \tilde{J}(f + \delta f) - \tilde{J}(f) = \big ( \nabla \tilde{J}(f), \delta f \big) + o\big( \Vert \delta f \Vert^2 \big),
 \end{equation*}
 and using the following inequality proved in \cite{burhan}: 
\begin{equation}\label{s3-14}
 \int_0^\mathcal{T} \int \limits_{\Gamma_1} (\delta  \tilde{u})^2 dx dt + \int_0^\mathcal{T} \int \limits_{\Gamma_2} (\delta  \tilde{u})^2 dx dt \leq L  \int_0^\mathcal{T} \int \limits_{\Gamma_1} (\delta  f_1)^2 dx dt + \int_0^\mathcal{T} \int \limits_{\Gamma_2} (\delta  f_2)^2 dx dt,
\end{equation}
 we have the following theorem: 
\begin{theorem}
The cost functional $\tilde{J}(f)$ is Fr\'{e}chet differentiable and the  Fr\'{e}chet derivative at $f$ can be calculated via the solution of the problem (\ref{s3-7}) as follows:
\begin{eqnarray}
\nabla \tilde{J}(f) = \{\tilde{\varphi}(x, t ; f)_{|_{\Gamma_1 ^ \mathcal{T}}}, \tilde{\varphi}(x, t ; f)_{|_{\Gamma_2 ^ \mathcal{T}}} \}.
\end{eqnarray}
\end{theorem}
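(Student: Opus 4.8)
The plan is to read the Fréchet derivative directly off the first-variation formula (\ref{s3-13}) that has already been assembled, and then to verify that the two quadratic boundary terms appearing there are genuinely of higher order in $\delta f$. First I would observe that the right-hand side of (\ref{s3-13}) splits naturally into a part that is \emph{linear} in the increment $\delta f = (\delta f_1, \delta f_2)$, namely
\[
\int_0^{\mathcal{T}} \int_{\Gamma_1} \tilde{\varphi}(x,t)\, \delta f_1 \, dx\, dt + \int_0^{\mathcal{T}} \int_{\Gamma_2} \tilde{\varphi}(x,t)\, \delta f_2 \, dx\, dt,
\]
together with a remainder consisting of the terms $\int_0^{\mathcal{T}}\int_{\Gamma_1}(\delta\tilde{u})^2\,dx\,dt$ and $\int_0^{\mathcal{T}}\int_{\Gamma_2}(\delta\tilde{u})^2\,dx\,dt$. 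The linear part is precisely the $L_2(\Gamma_1^{\mathcal{T}}) \times L_2(\Gamma_2^{\mathcal{T}})$ inner product of the pair $\{\tilde{\varphi}|_{\Gamma_1^{\mathcal{T}}}, \tilde{\varphi}|_{\Gamma_2^{\mathcal{T}}}\}$ with $\delta f$, so this pair is the natural candidate for $\nabla\tilde{J}(f)$.

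Next I would confirm that this linear part is a \emph{bounded} linear functional of $\delta f$, which amounts to verifying $\tilde{\varphi}|_{\Gamma_i^{\mathcal{T}}} \in L_2(\Gamma_i^{\mathcal{T}})$ for $i=1,2$. This follows from the well-posedness of the backward adjoint problem (\ref{s3-7}), guaranteed by \cite{Mis6}, together with the trace theorem applied to $\tilde{\varphi} \in L^2(0,\mathcal{T}; H_0^1(\Omega))$; Cauchy--Schwarz then bounds the functional by $\Vert \tilde{\varphi}|_{\Gamma_1^{\mathcal{T}}} \Vert \, \Vert \delta f_1 \Vert + \Vert \tilde{\varphi}|_{\Gamma_2^{\mathcal{T}}} \Vert \, \Vert \delta f_2 \Vert$. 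Hence the map $\delta f \mapsto \big( \nabla\tilde{J}(f), \delta f \big)$ is a continuous linear functional on $L_2(\Gamma_1^{\mathcal{T}}) \times L_2(\Gamma_2^{\mathcal{T}})$, which is exactly what a Fréchet derivative must be.

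The crux of the argument is to show that the remainder is of higher order relative to the linear principal part. Here I would invoke the stability estimate (\ref{s3-14}) established in \cite{burhan}, which controls the boundary $L_2$-norm of $\delta\tilde{u}$ by that of $\delta f$, giving
\[
\int_0^{\mathcal{T}}\int_{\Gamma_1}(\delta\tilde{u})^2\,dx\,dt + \int_0^{\mathcal{T}}\int_{\Gamma_2}(\delta\tilde{u})^2\,dx\,dt \leq C \left( \Vert \delta f_1 \Vert^2_{L_2(\Gamma_1^{\mathcal{T}})} + \Vert \delta f_2 \Vert^2_{L_2(\Gamma_2^{\mathcal{T}})} \right) = C\,\Vert \delta f \Vert^2.
\]
Thus the remainder is $O(\Vert \delta f \Vert^2)$, hence negligible compared with the linear term as $\Vert \delta f \Vert \to 0$. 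Comparing this with the definition $\delta\tilde{J}(f) = \big( \nabla\tilde{J}(f), \delta f \big) + o(\Vert \delta f \Vert^2)$ used above then identifies the Fréchet derivative as the stated pair of restrictions $\{\tilde{\varphi}(x,t;f)_{|_{\Gamma_1^{\mathcal{T}}}}, \tilde{\varphi}(x,t;f)_{|_{\Gamma_2^{\mathcal{T}}}}\}$, and the proof is complete.

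The main obstacle I anticipate is the trace regularity that makes both the boundedness step and the remainder estimate legitimate: one must ensure that $\delta\tilde{u}$ and $\tilde{\varphi}$ genuinely possess well-defined $L_2$ traces on $\Gamma_1$ and $\Gamma_2$ at the regularity level afforded by the weak solution theory, and that the affine dependence implicitly used in writing $\delta\tilde{u}$ as the solution of the increment problem (\ref{s3-6}) is valid. Both points rest on the linearity of the problem (\ref{s3-2}) in $f$ and on the stability estimate (\ref{s3-14}); once these are in hand, the remaining identification is the routine bookkeeping described above.
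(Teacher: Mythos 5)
Your proposal follows essentially the same route as the paper: both read the candidate gradient off the decomposition in (\ref{s3-13}) into a part linear in $\delta f$ plus the quadratic boundary remainders, and both dispose of the remainder via the estimate (\ref{s3-14}) from \cite{burhan} to conclude it is $O(\Vert\delta f\Vert^2)$. Your additional verification that the linear part is a bounded functional (via well-posedness of (\ref{s3-7}) and the trace theorem) is a routine supplement the paper leaves implicit, not a different argument.
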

Since $W \subseteq L_2(\Gamma_1 ^ \mathcal{T}) \times L_2(\Gamma_2 ^ \mathcal{T})$ is a closed and convex subset and $\tilde{J}(f)$ is  convex functional, then the necessary and sufficient condition for $f_ \star:= (f_{1\star},  f_{2\star})$ to be a quasi-solution of the inverse problem (\ref{s3-2}) and  (\ref{s3-3}) is that the following variational inequality holds \cite{convex} :
 \begin{equation*}
\bigg (\nabla \tilde{J}(f_ \star), f - f_ \star \bigg ) \geq 0, ~ \forall ~  f \in W. 
\end{equation*}
\noindent Then we have the following theorem: 
\begin{theorem}
Let $\varphi$ be the solution to the problem (\ref{s3-7}) for a given $f \in W$. Then  $f_ \star$ is a quasi-solution to the inverse problem  (\ref{s3-2}) and  (\ref{s3-3}) if and only if the following variational inequality holds: 
 \begin{equation*}
\int _0^\mathcal{T} \int_{\Gamma_1} \tilde{\varphi}(x, t ; f)(f_1 - f_{1\star}) dx dt + \int _0^\mathcal{T} \int_{\Gamma_2} \tilde{\varphi}(x, t ; f)(f_2 - f_{2\star}) dx dt \geq 0.
\end{equation*}
\end{theorem}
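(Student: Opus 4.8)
The plan is to read off the conclusion from the abstract first-order optimality condition already displayed just before the statement, by making the duality pairing explicit through the gradient formula of the previous theorem. First I would recall that $\tilde{J}$ is convex (indeed the representation (\ref{s3-13}) exhibits $\delta\tilde{J}(f)$ as the sum of a term linear in $\delta f$ and a nonnegative quadratic remainder) and that $W \subseteq L_2(\Gamma_1 ^ \mathcal{T}) \times L_2(\Gamma_2 ^ \mathcal{T})$ is closed and convex. Consequently, by the standard characterization of a minimizer of a convex functional over a convex set \cite{convex}, $f_\star$ solves the minimization problem (\ref{s21111_1}) for the linearized problem, i.e. is a quasi-solution, if and only if $\big(\nabla\tilde{J}(f_\star), f - f_\star\big) \geq 0$ for every $f \in W$.

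Next I would expand the inner product on the product space $L_2(\Gamma_1 ^ \mathcal{T}) \times L_2(\Gamma_2 ^ \mathcal{T})$, which for $g=(g_1,g_2)$ and $h=(h_1,h_2)$ reads
\begin{equation*}
\big(g, h\big) = \int_0^\mathcal{T} \int_{\Gamma_1} g_1 h_1 \, dx \, dt + \int_0^\mathcal{T} \int_{\Gamma_2} g_2 h_2 \, dx \, dt.
\end{equation*}
Inserting the gradient identity $\nabla\tilde{J}(f_\star) = \{\tilde{\varphi}(x, t ; f_\star)_{|_{\Gamma_1 ^ \mathcal{T}}}, \tilde{\varphi}(x, t ; f_\star)_{|_{\Gamma_2 ^ \mathcal{T}}}\}$ obtained in the preceding theorem, together with $f - f_\star = (f_1 - f_{1\star}, f_2 - f_{2\star})$, converts the abstract inequality into precisely the two boundary integrals in the statement, where $\tilde{\varphi}$ is the adjoint state solving (\ref{s3-7}) generated by the optimal flux $f_\star$.

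The argument is thus essentially a substitution, and I do not expect a genuine analytical obstacle: the substantive work was already carried out in establishing Fr\'{e}chet differentiability and the representation $\nabla\tilde{J}(f) = \{\tilde{\varphi}_{|_{\Gamma_1 ^ \mathcal{T}}}, \tilde{\varphi}_{|_{\Gamma_2 ^ \mathcal{T}}}\}$ through the adjoint problem (\ref{s3-7}) and the integral identity (\ref{s3-8}). The only points needing care are to confirm that the duality pairing appearing in the optimality condition is exactly the natural sum of the two $L_2$ boundary inner products used in the cost functional, and to keep track that the adjoint state entering the final inequality is evaluated at the optimal flux $f_\star$ rather than at a generic $f$. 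Both are immediate from the definitions, after which the stated equivalence follows.
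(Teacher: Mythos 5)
Your proposal is correct and follows essentially the same route as the paper: the paper likewise obtains this theorem by combining the standard first-order optimality condition $\big(\nabla \tilde{J}(f_\star), f - f_\star\big) \geq 0$ for a convex functional over the closed convex set $W$ (citing \cite{convex}) with the gradient representation $\nabla \tilde{J}(f) = \{\tilde{\varphi}|_{\Gamma_1^\mathcal{T}}, \tilde{\varphi}|_{\Gamma_2^\mathcal{T}}\}$ established in the preceding theorem, so the argument is indeed just a substitution. Your closing remark that the adjoint state in the final inequality should be the one generated by the optimal flux $f_\star$ is a fair point of care, as the paper's own phrasing (``for a given $f \in W$'') is loose on exactly this.
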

\begin{theorem}\label{cont}
The functional $ \tilde{J}(f)$  is of Hölder class $C^{1,1}(W)$ and the following inequality holds:
\begin{eqnarray}\label{holder}
\Vert \nabla \tilde{J}(f+\delta f) - \nabla \tilde{J}(f) \Vert  \leq L \Vert f \Vert , L \ge 0, \forall ~ f, f+ \delta f \in W . 
\end{eqnarray}
\end{theorem}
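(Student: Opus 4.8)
The plan is to exploit the explicit form of the gradient derived above, namely $\nabla \tilde{J}(f) = \{\tilde{\varphi}(x,t;f)_{|_{\Gamma_1^\mathcal{T}}}, \tilde{\varphi}(x,t;f)_{|_{\Gamma_2^\mathcal{T}}}\}$, where $\tilde{\varphi}(\cdot;f)$ solves the backward adjoint problem (\ref{s3-7}). Setting $\delta\tilde{\varphi} := \tilde{\varphi}(\cdot;f+\delta f) - \tilde{\varphi}(\cdot;f)$, the increment of the gradient is exactly the trace of $\delta\tilde{\varphi}$ on $\Gamma_1\cup\Gamma_2$, so that
\[
\Vert \nabla \tilde{J}(f+\delta f) - \nabla \tilde{J}(f)\Vert^2 = \Vert \delta\tilde{\varphi}\Vert_{L_2(\Gamma_1^\mathcal{T})}^2 + \Vert \delta\tilde{\varphi}\Vert_{L_2(\Gamma_2^\mathcal{T})}^2 .
\]
The whole estimate therefore reduces to controlling the boundary $L_2$-norm of $\delta\tilde{\varphi}$ by $\Vert \delta f\Vert$. (I read the right-hand side of (\ref{holder}) as $\Vert \delta f\Vert$, the increment, since Lipschitz continuity of the gradient is exactly what the $C^{1,1}$ class asserts.)

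First I would derive the boundary value problem satisfied by $\delta\tilde{\varphi}$. In the linearized setting the coefficient $\tilde{k}(x,t)$ is frozen, depending only on the previous iterate $u^{(n-1)}$ and not on $f$, so subtracting two copies of (\ref{s3-7}) for the fluxes $f+\delta f$ and $f$ shows that $\delta\tilde{\varphi}$ solves the homogeneous backward fractional equation ${D_t}_{\mathcal{T}^-}^{\beta}\delta\tilde{\varphi} = \nabla\cdot(\tilde{k}\nabla\delta\tilde{\varphi})$ in $\Omega_\mathcal{T}$, with terminal condition $\delta\tilde{\varphi}(x,\mathcal{T})=0$, homogeneous Dirichlet data on $\Gamma_3^\mathcal{T}\cup\Gamma_4^\mathcal{T}$, and Neumann data $-\tilde{k}\,\partial_n\delta\tilde{\varphi} = 2\,\delta\tilde{u}$ on $\Gamma_i^\mathcal{T}$, $i=1,2$. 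The measured data $h_1,h_2$ cancel because they are independent of $f$, and $\delta\tilde{u} := \tilde{u}(\cdot;f+\delta f) - \tilde{u}(\cdot;f)$ is precisely the solution of (\ref{s3-6}).

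Next I would establish an energy estimate for this backward problem, entirely parallel to Theorem \ref{theorem1} but for the right-sided Caputo derivative. Testing the weak form with $\delta\tilde{\varphi}$ itself, using the right-sided form of the Alikhanov inequality \cite{Ali}, the ellipticity bound $\tilde{k}\ge c_0$, and treating the boundary terms by the trace theorem and Young's inequality exactly as in (\ref{s2-6}), would yield a constant $C>0$ with
\[
\Vert \nabla\delta\tilde{\varphi}\Vert_{L_2(\Omega_\mathcal{T})}^2 \le C\left( \Vert \delta\tilde{u}\Vert_{L_2(\Gamma_1^\mathcal{T})}^2 + \Vert \delta\tilde{u}\Vert_{L_2(\Gamma_2^\mathcal{T})}^2 \right).
\]
Since $\delta\tilde{\varphi}$ vanishes on $\Gamma_3\cup\Gamma_4$ (of positive measure), the Poincaré inequality upgrades this to a bound on $\Vert \delta\tilde{\varphi}\Vert_{H^1(\Omega_\mathcal{T})}$, and a further application of the trace theorem controls the left-hand boundary norms $\Vert \delta\tilde{\varphi}\Vert_{L_2(\Gamma_i^\mathcal{T})}$ by $\Vert \delta\tilde{\varphi}\Vert_{H^1(\Omega_\mathcal{T})}$.

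Finally I would invoke the stability inequality (\ref{s3-14}), already proved in \cite{burhan}, to bound $\Vert \delta\tilde{u}\Vert_{L_2(\Gamma_1^\mathcal{T})}^2 + \Vert \delta\tilde{u}\Vert_{L_2(\Gamma_2^\mathcal{T})}^2$ by $\Vert \delta f_1\Vert_{L_2(\Gamma_1^\mathcal{T})}^2 + \Vert \delta f_2\Vert_{L_2(\Gamma_2^\mathcal{T})}^2 = \Vert \delta f\Vert^2$. Chaining the three estimates gives $\Vert \nabla \tilde{J}(f+\delta f) - \nabla \tilde{J}(f)\Vert \le L\Vert \delta f\Vert$ with a single constant $L\ge 0$, which is the Lipschitz continuity of the gradient, i.e. $\tilde{J}\in C^{1,1}(W)$. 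The main obstacle is the third paragraph: the energy estimate for the backward-in-time fractional adjoint problem requires the right-sided analogue of the Alikhanov inequality and a careful use of the fractional integration by parts (\ref{s1-2}) so that the terminal condition $\delta\tilde{\varphi}(x,\mathcal{T})=0$ eliminates any uncontrolled boundary-in-time term; the well-posedness that makes $\delta\tilde{\varphi}$ meaningful is supplied by \cite{Mis6}.
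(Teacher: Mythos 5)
Your proposal follows essentially the same route as the paper: form the difference $\delta\tilde{\varphi}$ of the two adjoint solutions, identify the homogeneous backward problem it satisfies with Neumann data $\delta\tilde{u}$ (the paper's (\ref{s3-15})), test with $\delta\tilde{\varphi}$ to get an energy identity, and then combine the ellipticity bound from (\ref{s1-3}), the Poincar\'e inequality, and the stability estimate (\ref{s3-14}) to close the Lipschitz bound in terms of $\Vert\delta f\Vert$. Your version is somewhat more explicit than the paper's (in particular about the right-sided Alikhanov inequality and the final trace-theorem step recovering the boundary norms of $\delta\tilde{\varphi}$), but it is the same argument.
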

\begin{proof}
Let $\tilde{\varphi}$ be the solution of the well-posed backward parabolic problem (\ref{s3-7}) and $\tilde{u}(x, t; f)$ and $\tilde{u}(x, t; f + \delta f)$ be the solutions of the problem (\ref{s3-2}). Then $\tilde{\varphi}$ is the solution of the following problem: 
\begin{eqnarray}\label{s3-15}
\left \{ \begin{array}{l}
\displaystyle  {D_t}_ {\mathcal{T}^-}^{\beta} \delta {\tilde{\varphi}} = \nabla \cdot \big(\tilde{k} (x, t) \nabla \tilde{\delta \varphi} \big),~(x,t)\in \Omega_\mathcal{T}, \\
- \tilde{k} (x, t) \frac {\partial \delta\tilde{ \varphi}}{\partial n} = \delta \tilde{u}(x, t; f), ~(x,t) \in \Gamma_1 ^ \mathcal{T},  \\
- \tilde{k} (x, t) \frac {\partial \delta \tilde{\varphi}}{\partial n} = \delta \tilde{u}(x, t; f), ~(x,t)\in \Gamma_2 ^ \mathcal{T}, \\
\delta \tilde{\varphi} (x,t) = 0, ~(x,t)\in \Gamma_3^ \mathcal{T} \cup \Gamma_4^ \mathcal{T}, \\
\delta \tilde{\varphi} (x,\mathcal{T}) = 0, \quad x \in \Omega .\\
\end{array} \right.
\end{eqnarray}
By multiplying the equation in (\ref{s3-15}) by $\delta \tilde{\varphi}$, integrating on $\Omega_\mathcal{T}$ and using the initial and boundary conditions, we obtain the following energy identity:
\begin{equation}\label{s3-16}
\int_{\Omega_\mathcal{T}} {D_t}_ {T^-}^{\beta} \delta \tilde{\varphi} \, \cdot \delta \tilde{\varphi}  (x, t) dx dt +\int_{\Omega_\mathcal{T}} \tilde k\left(|\nabla \delta \tilde{\varphi}|^2\right ) dx dt  =  \int_0^\mathcal{T} \int \limits_{\Gamma_1} (\delta  \tilde{u})^2 dx dt + \int_0^\mathcal{T} \int \limits_{\Gamma_2} (\delta  \tilde{u})^2 dx dt.
\end{equation}
By using Poincar\'e  inequality, the first condition in (\ref{s1-3}) and (\ref{s3-14}) in (\ref{s3-16}), we complete the proof. 
\end{proof}
\begin{remark}
Since $ \tilde{J}(f)$ is  Lipschitz continuous, the constant $\alpha_n > 0$ in the following iteration process based on any gradient method 
\begin{eqnarray}\label{iteration}
f_{n+1} = f_n - \alpha_n \nabla  \tilde{J}(f_n),
\end{eqnarray}
can be determined by $0 < \delta_0 \le \alpha_n \le \frac{2}{L + 2\delta_1} $, where $\delta_0$ and $\delta_1$ are arbitrary parameters. 
\end{remark}
The proofs of the following theorems (\ref{ah1}) - (\ref{ah3}) follow the ideas of \cite{hasanov1} closely. For the convenience of the readers, we sketch their proofs here. 
\begin{theorem}\label{ah1}
The following inequality holds $\forall ~  f_1, f_2 \in W$:
 \begin{equation}\label{s3-17}
\vert  \tilde{J}(f_1) - \tilde{J}(f_2) - (\nabla \tilde{J} (f_2), f_1 - f_2) \vert \leq  \frac{1}{2} L \Vert   f_1 - f_2\Vert^2. 
\end{equation}
\end{theorem}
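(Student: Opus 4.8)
The plan is to reduce (\ref{s3-17}) to the Lipschitz-gradient estimate (\ref{holder}) established in Theorem \ref{cont} by integrating $\nabla \tilde{J}$ along the line segment joining $f_2$ to $f_1$. This segment lies entirely in $W$ because $W$ is convex, so every point $f_2 + \tau(f_1 - f_2)$ with $\tau \in [0,1]$ is an admissible flux at which $\tilde{J}$ and its Fréchet derivative are defined.

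First I would introduce the scalar auxiliary function $g(\tau) := \tilde{J}(f_2 + \tau(f_1 - f_2))$, $\tau \in [0,1]$. By the Fréchet differentiability of $\tilde{J}$ and the chain rule, $g$ is differentiable with $g'(\tau) = (\nabla \tilde{J}(f_2 + \tau(f_1 - f_2)), f_1 - f_2)$, where $(\cdot,\cdot)$ denotes the inner product of $L_2(\Gamma_1^\mathcal{T}) \times L_2(\Gamma_2^\mathcal{T})$. The continuity of $\tau \mapsto \nabla \tilde{J}(f_2 + \tau(f_1 - f_2))$ needed to integrate $g'$ is supplied directly by (\ref{holder}). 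Next I would apply the fundamental theorem of calculus, $\tilde{J}(f_1) - \tilde{J}(f_2) = g(1) - g(0) = \int_0^1 g'(\tau)\,d\tau$, and subtract the constant term written as $(\nabla \tilde{J}(f_2), f_1 - f_2) = \int_0^1 (\nabla \tilde{J}(f_2), f_1 - f_2)\,d\tau$ to obtain
\[
\tilde{J}(f_1) - \tilde{J}(f_2) - (\nabla \tilde{J}(f_2), f_1 - f_2) = \int_0^1 \big(\nabla \tilde{J}(f_2 + \tau(f_1 - f_2)) - \nabla \tilde{J}(f_2),\ f_1 - f_2\big)\,d\tau.
\]

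Then I would bound the integrand by the Cauchy–Schwarz inequality and invoke (\ref{holder}) with increment $\delta f = \tau(f_1 - f_2)$, which gives $\Vert \nabla \tilde{J}(f_2 + \tau(f_1 - f_2)) - \nabla \tilde{J}(f_2) \Vert \leq L\,\tau\,\Vert f_1 - f_2 \Vert$. Taking absolute values and integrating in $\tau$ yields
\[
\big| \tilde{J}(f_1) - \tilde{J}(f_2) - (\nabla \tilde{J}(f_2), f_1 - f_2) \big| \leq L \Vert f_1 - f_2 \Vert^2 \int_0^1 \tau\,d\tau = \frac{1}{2} L \Vert f_1 - f_2 \Vert^2,
\]
which is exactly (\ref{s3-17}).

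The main obstacle is the correct reading and application of the Lipschitz estimate (\ref{holder}): as printed it reads $\Vert \nabla \tilde{J}(f+\delta f) - \nabla \tilde{J}(f)\Vert \le L \Vert f\Vert$, but for the argument to close it must be the standard Lipschitz bound $\Vert \nabla \tilde{J}(f+\delta f) - \nabla \tilde{J}(f)\Vert \le L\Vert \delta f\Vert$ in the \emph{increment} $\delta f$. With the increment taken as $\tau(f_1 - f_2)$, this furnishes the crucial factor $\tau$ that, after integration against $\int_0^1 \tau\,d\tau = \tfrac12$, produces the constant $\tfrac12$ in the statement. Beyond this, the only point requiring care is the justification of the chain rule for the directional variation of $\tilde{J}$ along the segment, which rests on the Fréchet differentiability already proved and the gradient continuity guaranteed by Theorem \ref{cont}.
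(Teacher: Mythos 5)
Your proposal is correct and follows essentially the same route as the paper: represent $\tilde{J}(f_1)-\tilde{J}(f_2)$ by integrating the gradient along the segment $f_2+\tau(f_1-f_2)$ and invoke the Lipschitz estimate (\ref{holder}) to produce the factor $\tfrac12 L\Vert f_1-f_2\Vert^2$. Your write-up is in fact more careful than the paper's one-line sketch, since you correctly include the pairing with $f_1-f_2$ in the integral representation and explicitly note that (\ref{holder}) must be read as a bound in the increment $\delta f$ for the argument to close.
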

\begin{proof}
By using (\ref{holder}) in the following formula
 \begin{equation*}
\big \vert \tilde{J}(f_1) - \tilde{J}(f_2) \big \vert = \bigg \vert \int_0^1 \nabla \tilde{J}(f_2 +\theta (f_1 - f_2) ) d\theta \bigg \vert,
\end{equation*}
we have the proof. 
\end{proof}
\begin{theorem}\label{ah2}
Let  $f_n \in W$ be iterations defined by (\ref{iteration}). Then the following inequality holds for $n = 0, 1, 2, \cdots$  :
 \begin{equation}\label{s3-20}
\tilde{J}(f_n) - \tilde{J}(f_{n+1}) \geq \frac{1}{2L} \Vert \nabla \tilde{J}(f_n) \Vert^2.
\end{equation}
\end{theorem}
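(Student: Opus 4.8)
The plan is to recognize (\ref{s3-20}) as the standard ``sufficient decrease'' estimate for a gradient iteration applied to a functional with Lipschitz gradient, and to extract it directly from the quadratic upper bound already established in Theorem \ref{ah1}. First I would apply (\ref{s3-17}) with the particular choice $f_1 = f_{n+1}$ and $f_2 = f_n$. Since the absolute value in (\ref{s3-17}) bounds the enclosed expression from above, this immediately yields the one-sided descent inequality
$$\tilde{J}(f_{n+1}) \leq \tilde{J}(f_n) + \big(\nabla \tilde{J}(f_n), f_{n+1} - f_n\big) + \frac{1}{2} L \Vert f_{n+1} - f_n \Vert^2 .$$

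Next I would insert the defining relation of the iteration (\ref{iteration}), namely $f_{n+1} - f_n = -\alpha_n \nabla \tilde{J}(f_n)$. Substituting this into both the inner-product term and the quadratic term collapses the right-hand side onto a single scalar multiple of $\Vert \nabla \tilde{J}(f_n) \Vert^2$, so that after rearranging one obtains
$$\tilde{J}(f_n) - \tilde{J}(f_{n+1}) \geq \alpha_n \left(1 - \frac{1}{2} L \alpha_n \right) \Vert \nabla \tilde{J}(f_n) \Vert^2 .$$
The final step is then to bound the scalar factor $\alpha_n\big(1 - \tfrac12 L \alpha_n\big)$ from below by $1/(2L)$. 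The key algebraic observation is that this concave-in-$\alpha_n$ factor is maximized at $\alpha_n = 1/L$, where it equals exactly $1/(2L)$; with this distinguished step the factor attains $1/(2L)$ and (\ref{s3-20}) follows at once.

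The step I expect to be the genuinely delicate one is this last bound on the step length. Whereas the wide range $0 < \delta_0 \le \alpha_n \le 2/(L + 2\delta_1)$ quoted in the preceding remark suffices only for monotone decrease, the sharp constant $1/(2L)$ forces the step to the precise value $\alpha_n = 1/L$: indeed $\alpha_n\big(1 - \tfrac12 L\alpha_n\big) \geq 1/(2L)$ is equivalent to $(L\alpha_n - 1)^2 \leq 0$, so any other admissible $\alpha_n$ yields a strictly smaller decrease constant. I would therefore make explicit that this particular step is used, noting that $\alpha_n = 1/L$ does lie in the admissible interval once the free parameters are taken with $0 < \delta_1 < L/2$ and $\delta_0 \le 1/L$. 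Everything else is the mechanical substitution of (\ref{iteration}) into the quadratic bound of Theorem \ref{ah1}, with no further analytic input required.
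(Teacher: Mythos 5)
Your proposal is correct and follows essentially the same route as the paper: apply the quadratic bound of Theorem \ref{ah1} with $f_1=f_{n+1}$, $f_2=f_n$, substitute $f_{n+1}-f_n=-\alpha_n\nabla\tilde{J}(f_n)$, and observe that the resulting factor $\alpha_n\bigl(1-\tfrac{L\alpha_n}{2}\bigr)$ attains its maximum $\tfrac{1}{2L}$ at $\alpha_n=\tfrac{1}{L}$. You are in fact more careful than the paper on the one delicate point: since $\alpha\bigl(1-\tfrac{L\alpha}{2}\bigr)\ge\tfrac{1}{2L}$ is equivalent to $(L\alpha-1)^2\le 0$, the stated constant requires the specific step $\alpha_n=1/L$ (consistent with the remark following the theorem), a restriction the paper's own proof leaves implicit.
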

\begin{proof}
Let $f_1 = f_{n+1}$, $f_2 = f_{n}$ and $\alpha_n = \alpha > 0$ in (\ref{s3-17}). Then we have:
 \begin{equation*}
\tilde{J}(f_n) - \tilde{J}(f_{n+1}) \geq \alpha (1 - \frac{\alpha L}{2}), \forall \alpha >0 . 
\end{equation*}
The maximum value of the function $\alpha (1 - \frac{\alpha L}{2})$  occurs at $\alpha_\star = \frac{1}{L} $, therefore the maximum value of that function is  $\frac{1}{2L}$. This completes the proof. 
\end{proof}
\begin{remark}
The optimal value of the iteration parameter $\alpha_n$ that corresponds to $\delta_0 = \frac{1} {L}$ and $\delta_1 = \frac{L} {2}$ is $\alpha_\star = \frac{1}{L} $.
\end{remark}
\begin{remark}\label{decroi}
Let $\{f_n\}$ be the sequence defined by (\ref{iteration}). Then, by (\ref{s3-20}),   $\{\tilde J(f_n)\}$ is a monotone decreasing convergent sequence and
 \begin{equation*}
\lim_{n \to \infty} \Vert \nabla \tilde J(f_n)  \Vert = 0.
\end{equation*}
Also, the following estimate holds: 
 \begin{equation*}
\Vert f_{n+1} - f_n \Vert ^2 \leq 2 \big (\tilde J(f_n) - \tilde J(f_{n+1})\big ). 
\end{equation*}
We note that for a detailed proof about the convergence and monotonicity of the sequence $\{\tilde J(f_n)\}$, one can refer to the work in \cite{lesnic}.
\end{remark}
\begin{theorem}\label{ah3}
Let  $f_0 \in W$ be an initial iteration. Then $\{f_n\} \subset W$, defined by (\ref{iteration}), converges to a quasi-solution $f_\star \in W$ of the problem (\ref{s3-2}) and the following estimate holds for $\tilde{J}(f_n)$:
 \begin{equation}\label{s3-22}
0 \leq \tilde{J}(f_n) - \tilde{J}(f_\star) \leq 2L d^2 n^{-1}, d > 0.  
\end{equation}
\end{theorem}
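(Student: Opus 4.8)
The plan is to reduce everything to a scalar recursion for the nonnegative error $\Delta_n := \tilde{J}(f_n) - \tilde{J}(f_\star)$ and then telescope. The lower bound $0 \le \Delta_n$ in (\ref{s3-22}) is immediate because $f_\star$ minimizes $\tilde{J}$ over $W$. For the upper bound I would first record a uniform distance bound: since $\{f_n\} \subset W$ and $W$ is bounded by the box constraints (\ref{s21_1}), the quantity $d := \sup_{n} \Vert f_n - f_\star \Vert$ is finite (one may take $d = \operatorname{diam}(W)$), and this is the $d$ appearing in (\ref{s3-22}).

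The second ingredient is the gradient (subdifferential) inequality for the convex functional $\tilde{J}$, namely $\tilde{J}(f_\star) \ge \tilde{J}(f_n) + (\nabla \tilde{J}(f_n), f_\star - f_n)$, which rearranges to $\Delta_n \le (\nabla \tilde{J}(f_n), f_n - f_\star)$. Applying Cauchy--Schwarz and the distance bound gives $\Delta_n \le d\, \Vert \nabla \tilde{J}(f_n) \Vert$, hence $\Vert \nabla \tilde{J}(f_n)\Vert^2 \ge \Delta_n^2 / d^2$. Combining this with the descent estimate of Theorem \ref{ah2}, $\Delta_n - \Delta_{n+1} = \tilde{J}(f_n) - \tilde{J}(f_{n+1}) \ge \frac{1}{2L}\Vert \nabla \tilde{J}(f_n)\Vert^2$, yields the key one-step recursion $\Delta_n - \Delta_{n+1} \ge \frac{1}{2Ld^2}\Delta_n^2$.

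To finish, I would invoke the monotonicity $0 < \Delta_{n+1} \le \Delta_n$ from Remark \ref{decroi} (assuming $\Delta_n > 0$, else the claim is trivial from that index on) to replace $\Delta_n^2 \ge \Delta_n \Delta_{n+1}$, divide the recursion by $\Delta_n \Delta_{n+1}$, and obtain $\frac{1}{\Delta_{n+1}} - \frac{1}{\Delta_n} \ge \frac{1}{2Ld^2}$. Summing this telescoping inequality over $0,1,\dots,n-1$ gives $\frac{1}{\Delta_n} \ge \frac{1}{\Delta_0} + \frac{n}{2Ld^2} \ge \frac{n}{2Ld^2}$, i.e. $\Delta_n \le 2Ld^2 n^{-1}$, which is exactly (\ref{s3-22}).

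It remains to justify that $\{f_n\}$ actually converges to a quasi-solution $f_\star$. Here I would use that $W$ is bounded, closed and convex, extract a weakly convergent subsequence $f_{n_k} \rightharpoonup f_\star \in W$, and pass to the limit in the variational inequality using $\lim_n \Vert \nabla \tilde{J}(f_n)\Vert = 0$ from Remark \ref{decroi}; weak lower semicontinuity of the convex $\tilde{J}$ then identifies $f_\star$ as a quasi-solution, and the monotonicity of $\{\tilde{J}(f_n)\}$ upgrades subsequential convergence of the values to $\Delta_n \to 0$ along the whole sequence. The main obstacle is precisely this qualitative convergence step: the Hölder smoothness of Theorem \ref{cont} together with the box bounds on $W$ only delivers boundedness and $\Vert \nabla \tilde{J}(f_n)\Vert \to 0$, and turning weak subsequential limits into a genuine quasi-solution (rather than merely a stationary point) is where the convexity of $\tilde{J}$ and the variational characterization must be used carefully; the rate estimate itself, by contrast, is the purely algebraic telescoping described above.
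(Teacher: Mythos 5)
Your argument is correct, and it is worth noting that the paper itself gives no proof of this theorem at all: Theorem \ref{ah3} is stated and the authors only point to \cite{hasanov1} for the argument. What you have written is precisely the standard Hasanov-style derivation that the cited reference uses, so in substance you have reconstructed the omitted proof. The chain convexity $\Rightarrow$ $\Delta_n \le (\nabla\tilde J(f_n), f_n - f_\star) \le d\,\Vert\nabla\tilde J(f_n)\Vert$ with $d$ the (finite, by the box constraints (\ref{s21_1})) diameter bound, combined with the descent inequality (\ref{s3-20}) to get $\Delta_n-\Delta_{n+1}\ge \Delta_n^2/(2Ld^2)$, and then the telescoping of $1/\Delta_{n+1}-1/\Delta_n\ge 1/(2Ld^2)$, is exactly the intended route and delivers the constant $2Ld^2$ in (\ref{s3-22}). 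Two caveats, neither of which is a gap relative to what the paper itself establishes: first, the iteration (\ref{iteration}) is an unprojected gradient step, so the inclusion $\{f_n\}\subset W$ is really a hypothesis carried in the theorem statement rather than something either you or the paper derives; second, the qualitative convergence of $f_n$ to a genuine quasi-solution (as opposed to convergence of the values $\tilde J(f_n)\to \tilde J_\star$) is only sketched in your last paragraph, but the paper is no more precise on this point, and your appeal to boundedness of $W$, weak subsequential compactness, $\Vert\nabla\tilde J(f_n)\Vert\to 0$, and weak lower semicontinuity of the convex functional is the right way to close it.
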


\section{Conjugate gradient method}\label{sec4}
In this section, we employ a conjugate gradient method to approximate the minimizer of the functional $J(f_1, f_2)$ defined by \eqref{s2111_1}. Let $(f_1^k, f_2^k)$ be the $k$th approximation of the solution $(f_1, f_2)$. The iterations are given by the following updated equations:
\begin{equation}\label{process}
f_1^{k+1}=f_1^k+\zeta_1^k S_1^k, \quad f_2^{k+1}=f_2^k+\zeta_2^k S_2^k, \quad k=0,1,2, \ldots,
\end{equation}
here $f_1^0(x)$ and $f_2^0(x)$ are the initial guesses for $f_1(x)$ and $f_2(x)$, respectively. For $i=1,2,$ the terms $\zeta_i^k$ are the step sizes, and $S_i^k$ represent the search directions which are determined by the following equations:
\begin{equation}\label{direction}
S_i^k=\left\{\begin{array}{ll}
-\tilde{J}_i^{\prime 0}, & k=0, \\
-\tilde{J}_i^{\prime k}+\vartheta_i^k\, S_i^{k-1}, & k \geq 1,
\end{array} \right.
\end{equation}
for $i=1,2,$ where $\tilde{J}_i^{\prime k}:=\tilde{J}_{f_i}^{\prime}\left(f_1^k, f_2^k\right)$ denotes the derivative of $\tilde{J}$ with respect to $f_i$, and $\vartheta_i^k$ are the conjugate coefficients obtained using the Fletcher-Reeves formula \cite{salah1}
\begin{equation}\label{conj_coef}
\vartheta_i^k=\frac{\left\|\tilde{J}_i^{\prime k}\right\|_{L_2(\Gamma_i^{\mathcal{T}})}}{\left\|\tilde{J}_i^{\prime k-1}\right\|_{L_2(\Gamma_i^{\mathcal{T}})}}, \quad k \geq 1.
\end{equation}
To determine the step sizes $\zeta_i^k$ $(i=1,2)$, we employ the line search method. Specifically, we solve the minimization problem given by
\begin{equation}\label{minstep}
\tilde{J}\left(f_1^{k+1}, f_2^{k+1}\right)=\min _{\zeta_1, \zeta_2 \geq 0} \tilde{J}\left(f_1^k+\zeta_1 S_1^k, f_2^k+\zeta_2 S_2^k\right).    
\end{equation}
\vspace{-0.2cm}
Using this formulation, we can derive the partial derivatives of $\tilde{J}\left(f_1^{k+1}, f_2^{k+1}\right)$ with respect to $\zeta_1^k$ as follows:
$$
\begin{aligned}
\frac{\partial \tilde{J}\left(f_1^{k+1}, f_2^{k+1}\right)}{\partial \zeta_1^k} & =\lim _{\zeta_1^k \rightarrow 0} \frac{\tilde{J}\left(f_1^{k+1}, f_2^{k+1}\right)-\tilde{J}\left(f_1^{k}, f_2^{k+1}\right)}{\zeta_1^k} \\
&=\lim _{\zeta_1^k \rightarrow 0}\frac{\zeta_1^k\,\int_0^\mathcal T \int_{\Gamma_1} S_1^k\, \varphi_1^{k+1}\, dx\, dt + o(\left\|\zeta_1^k\, S_1^k\right\|)}{\zeta_1^k}  \\
& =\int_0^\mathcal{T}\int_{\Gamma_1} \tilde{J}_1^{\prime k+1}\, S_1^k(x,t)\,  dx\, dt.
\end{aligned}
$$
Similarly, we can derive the partial derivative with respect to $\zeta_2^k$ as
$$
\frac{\partial \tilde{J}\left(f_1^{k+1}, f_2^{k+1}\right)}{\partial \zeta_2^k}=\int_0^\mathcal{T}\int_{\Gamma_2} \tilde{J}_2^{\prime k+1}\, S_2^k(x,t)\,  dx\, dt.
$$
Thus, the minimization problem \eqref{minstep} implies that the step sizes $\zeta_1^k$ and $\zeta_2^k$ satisfy the following conditions:
\begin{equation}\label{first_cond}
   \int_0^\mathcal{T}\int_{\Gamma_1} \tilde{J}_1^{\prime k+1}\, S_1^k(x,t)\,  dx\, dt=0, \quad  \int_0^\mathcal{T}\int_{\Gamma_2} \tilde{J}_2^{\prime k+1}\, S_2^k(x,t)\,  dx\, dt=0, \quad k \geq 0. 
\end{equation}
\subsection{Convergence}
This subsection is devoted to the convergence of the iterative process \eqref{process}-\eqref{conj_coef}. The following theorem can be easily proved by following the arguments in \cite{lesnic} closely.  For the convenience of the reader, we present the main steps in the proof.
\begin{theorem}
Let $\{f_1^k, f_2^k\}_{k \geq 0} \in W$ be the iterations defined by the process described in equation \eqref{process}, and let $\zeta_1^k$ and $\zeta_2^k$ satisfy the condition in the equation \eqref{minstep}. Then the CGM \eqref{process}-\eqref{conj_coef} either terminates at a stationary point or converges in the following sense:
$$
\liminf _{k \rightarrow \infty}\left\|\tilde{J}_1^{\prime k}\right\|_{L_2(\Gamma_1^\mathcal T)}=0, \quad \liminf _{k \rightarrow \infty}\left\|\tilde{J}_2^{\prime k}\right\|_{L_2(\Gamma_2^\mathcal T)}=0 .
$$    
\end{theorem}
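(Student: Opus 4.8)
The plan is to transcribe the classical Zoutendijk / Al-Baali convergence argument for the Fletcher--Reeves method into the present Hilbert-space setting, taking the $C^{1,1}(W)$ regularity of $\tilde J$ from Theorem \ref{cont} as the only analytic input and working separately in each component $i\in\{1,2\}$. Write $f^k:=(f_1^k,f_2^k)$, $g_i^k:=\tilde J_i^{\prime k}$, and let $(\cdot,\cdot)$ denote the $L_2(\Gamma_i^\mathcal{T})$ inner product. The componentwise treatment is justified because the joint line search \eqref{minstep} delivers, through its first-order optimality conditions, the two decoupled orthogonality relations \eqref{first_cond}, namely $(g_i^{k+1},S_i^k)=0$.

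First I would record the descent property. Substituting the direction recursion \eqref{direction}, $S_i^k=-g_i^k+\vartheta_i^k S_i^{k-1}$, into $(g_i^k,S_i^k)$ and using the orthogonality $(g_i^k,S_i^{k-1})=0$ inherited from the previous step gives
\[
(g_i^k,S_i^k)=-\Vert g_i^k\Vert^2 .
\]
Thus each $S_i^k$ is a strict descent direction, unless $g_i^k=0$, in which case the iteration has reached a stationary point and terminates, giving the first alternative in the statement. Consequently the objective values $\{\tilde J(f^k)\}$ decrease monotonically, and being bounded below by $0$ they converge.

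Next I would derive the Zoutendijk inequality. The descent lemma is exactly inequality \eqref{s3-17} of Theorem \ref{ah1}; applied to the update $f^{k+1}=f^k+\zeta^k S^k$ it produces, in the $i$-th block, the quadratic upper model $\tilde J(f^{k+1})\le \tilde J(f^k)+\zeta_i^k(g_i^k,S_i^k)+\tfrac{L}{2}(\zeta_i^k)^2\Vert S_i^k\Vert^2$. Minimising the right-hand side over the step and comparing with the exact line search, which can only do better, yields the per-step decrease bound
\[
\tilde J(f^k)-\tilde J(f^{k+1})\ \ge\ \frac{1}{2L}\,\frac{\Vert g_i^k\Vert^4}{\Vert S_i^k\Vert^2}.
\]
Summing over $k$ and invoking convergence of $\{\tilde J(f^k)\}$ forces, for each $i$, the Zoutendijk condition $\sum_{k\ge0}\Vert g_i^k\Vert^4/\Vert S_i^k\Vert^2<\infty$.

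Finally I would close by contradiction. Expanding $\Vert S_i^k\Vert^2$ from \eqref{direction} and again using $(g_i^k,S_i^{k-1})=0$ gives the Pythagorean identity $\Vert S_i^k\Vert^2=\Vert g_i^k\Vert^2+(\vartheta_i^k)^2\Vert S_i^{k-1}\Vert^2$; dividing by $\Vert g_i^k\Vert^4$ and inserting the Fletcher--Reeves coefficient \eqref{conj_coef} makes the recursion telescope to
\[
\frac{\Vert S_i^k\Vert^2}{\Vert g_i^k\Vert^4}=\sum_{j=0}^{k}\frac{1}{\Vert g_i^j\Vert^2}.
\]
If the asserted convergence failed, so that $\Vert g_i^k\Vert\ge\gamma>0$ for all large $k$, the right-hand side would be $O(k)$, whence $\Vert g_i^k\Vert^4/\Vert S_i^k\Vert^2\ge \gamma^2/(k+1)$, whose sum diverges and contradicts the Zoutendijk condition. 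Therefore $\liminf_{k\to\infty}\Vert g_i^k\Vert=0$ for $i=1,2$, which is the stated conclusion. The step I expect to be the main obstacle is the Zoutendijk inequality: it requires the joint minimisation \eqref{minstep} to be reconciled with a componentwise (equivalently, product-space) line search and, in the infinite-dimensional admissible set $W$, the exact minimiser along $S_i^k$ to be genuinely attained so that the $C^{1,1}$ upper model applies. The telescoping step moreover relies on the \emph{squared} Fletcher--Reeves ratio, so \eqref{conj_coef} must be read with that normalisation for the cancellation to go through.
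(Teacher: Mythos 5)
Your proposal is correct and follows essentially the same route as the paper's proof: assume $\liminf\Vert\tilde J_i^{\prime k}\Vert>0$, use the line-search orthogonality \eqref{first_cond} and the Fletcher--Reeves coefficient to telescope $\Vert S_i^k\Vert^2/\Vert\tilde J_i^{\prime k}\Vert^4$ into $\sum_m\Vert\tilde J_i^{\prime m}\Vert^{-2}$, and contradict the summability of $\Vert\tilde J_i^{\prime k}\Vert^4/\Vert S_i^k\Vert^2$. The one place you go further is the Zoutendijk half: the paper simply cites the Lipschitz continuity of the gradients and external references for $\sum_k\Vert\tilde J_i^{\prime k}\Vert^4/\Vert S_i^k\Vert^2<\infty$, whereas you derive it from the paper's own $C^{1,1}$ descent estimate (Theorem \ref{ah1}) together with the exact line search, which makes the argument self-contained; your observation that the telescoping cancellation forces \eqref{conj_coef} to be read with squared norms (the standard Fletcher--Reeves normalisation) correctly identifies an inconsistency in the paper's stated formula.
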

\begin{proof}
Assume that $\liminf _{k \rightarrow \infty}\left\|\tilde{J}_i^{\prime k}\right\|_{L_2(\Gamma_i^\mathcal T)} \neq 0$, for $i=1, 2$. Then, there exists a constant $M>0$ and an integer $k_0>0$ such that $\left\|\tilde{J}_i^{\prime k}\right\|_{L_2(\Gamma_i^\mathcal T)} \geq M$, for $k \geq k_0$ and $i=1, 2$. Then, on the one hand, from \eqref{conj_coef} and \eqref{first_cond}, one can get $$\left\|S_i^k\right\|_{L_2(\Gamma_i^\mathcal T)}^2=\left\|\tilde{J}_i^{\prime k}\right\|_{L_2(\Gamma_i^\mathcal T)}^2+\frac{\left\|\tilde{J}_i^{\prime k}\right\|_{L_2(\Gamma_i^\mathcal T)}^4}{\left\|\tilde{J}_i^{k-1}\right\|_{L_2(\Gamma_i^\mathcal T)}^4}\left\|S_i^{k-1}\right\|_{L_2(\Gamma_i^\mathcal T)}^2,\quad \text{for}\quad k>k_0.$$ Dividing both sides by $\left\|\tilde{J}_i^{\prime k}\right\|_{L_2(\Gamma_i^\mathcal T)}^4$, we obtain
$$
\frac{\left\|S_i^k\right\|_{L_2(\Gamma_i^\mathcal T)}^2}{\left\|\tilde{J}_i^{\prime k}\right\|_{L_2(\Gamma_i^\mathcal T)}^4}=\frac{1}{\left\|\tilde{J}_i^{\prime k}\right\|_{L_2(\Gamma_i^\mathcal T)}^2}+\frac{\left\|S_i^{k-1}\right\|_{L_2(\Gamma_i^\mathcal T)}^2}{\left\|\tilde{J}_i^{\prime k-1}\right\|_{L_2(\Gamma_i^\mathcal T)}^4}=\sum_{m=k_0}^k \frac{1}{\left\|\tilde{J}_i^{\prime m}\right\|_{L_2(\Gamma_i^\mathcal T)}^2} \leq \frac{k-k_0+1}{M}, \quad k>k_0.
$$
Then,
\begin{equation}\label{one_hand}
 \sum_{k \geq 0} \frac{\left\|\tilde{J}_i^{\prime k}\right\|_{L_2(\Gamma_i^\mathcal T)}^4}{\left\|S_1^k\right\|_{L_2(\Gamma_i^\mathcal T)}^2} \geq \sum_{k>k_0} \frac{\left\|\tilde{J}_i^{\prime k}\right\|_{L_2(\Gamma_i^\mathcal T)}^4}{\left\|S_i^k\right\|_{L_2(\Gamma_i^\mathcal T)}^2} \geq M \sum_{k \geq 1} \frac{1}{k+1}=\infty.   
\end{equation}
On the other hand, by using the Lipschitz continuity of the gradients $\tilde{J}_1^{\prime}$ and $\tilde{J}_2^{\prime}$, and by following the arguments in \cite{salah1, salah2, salah3, salah4}, we can obtain that
\begin{equation}\label{second_hand}
\sum_{k \geq 0} \frac{\left\|\tilde{J}_i^{\prime k}\right\|_{L_2(\Gamma_i^\mathcal T)}^4}{\left\|S_i^k\right\|_{L_2(\Gamma_i^\mathcal T)}^2}<\infty, \quad \text{for} \quad i=1, 2.     
\end{equation}
Therefore, we conclude that the inequalities in \eqref{one_hand} are in contradiction with the ones in \eqref{second_hand}. The proof is complete.
\end{proof}
 \subsection{Iterative algorithm}
 This subsection is concerned with the main steps of the iterative algorithm that will be used in the next section to find an approximate solution to the minimization problem. Based on the above discussion, one can remark that all the parameters are expressed explicitly except for the search step sizes $\zeta_1^k$, for $i=1, 2.$ In order to determine these two parameters, we use the line search method. The parameters $\zeta_i^k,$ for $i=1, 2,$ can be obtained by minimizing
$$
\begin{aligned}
\tilde{J}\left(f_1^{k+1}, f_2^{k+1}\right)&=\frac{1}{2} \int_0^\mathcal T\int_{\Gamma_1} \Big(\tilde{u}\left(f_1^k+\zeta_1^k S_1^k, f_2^k+\zeta_2^k S_2^k\right)-h_1\Big)^2 d x\, dt \\
& +\frac{1}{2} \int_0^\mathcal T\int_{\Gamma_2} \Big(\tilde{u}\left(f_1^k+\zeta_1^k S_1^k, f_2^k+\zeta_2^k S_2^k\right)-h_2\Big)^2 d x\, dt.
\end{aligned}
$$
Since we have $$\tilde{u}\left(f_1^k+\zeta_1^k S_1^k, f_2^k+\zeta_2^k S_2^k\right):=\tilde{u}\left(f_1^k, f_2^k\right)+\zeta_1^k\, \tilde{u}^0\left(S_1^k, 0\right)+\zeta_2^k\,\tilde{u}^0\left(0, S_2^k\right),$$
where $\tilde{u}^0$ is the solution to the forward problem \eqref{s3-2} with source term $F\equiv 0$,  the function $\tilde{J}\left(f_1^{k+1}, f_2^{k+1}\right)$ to be minimized can be rewritten as follows:
$$\tilde{J}\left(f_1^{k+1}, f_2^{k+1}\right)=\frac{1}{2} \sum_{i=1}^2\int_0^\mathcal T\int_{\Gamma_i}\Big(\tilde{u}\left(f_1^k, f_2^k\right)+\zeta_1^k\, \tilde{u}^0\left(S_1^k, 0\right)+\zeta_2^k\,\tilde{u}^0\left(0, S_2^k\right)-h_i\Big)^2 d x\, dt.$$
It follows
$$
\frac{\partial \tilde{J}\left(f_1^{k+1}, f_2^{k+1}\right)}{\partial \zeta_1^k}=\mathcal R_1 \zeta_1^k+\mathcal R_2 \zeta_2^k+\mathcal R_3, \quad \frac{\partial \tilde{J}\left(f_1^{k+1}, f_2^{k+1}\right)}{\partial \zeta_2^k}=\mathcal R_2 \zeta_1^k+\mathcal R_4 \zeta_2^k+\mathcal R_5,
$$
where
$$\mathcal R_1=\sum_{i=1}^2\int_0^\mathcal T\int_{\Gamma_i} \Big|\tilde{u}^0(S_1^k,0)\Big|^2\, dx\, dt, \quad \mathcal R_2=\sum_{i=1}^2\int_0^\mathcal T\int_{\Gamma_i} \tilde{u}^0(S_1^k,0)\, \tilde{u}^0(0,S_2^k)\, dx\, dt, $$ $$\mathcal R_3=\sum_{i=1}^2\int_0^\mathcal T\int_{\Gamma_i} \tilde{u}^0(S_1^k,0)\,\Big(\tilde{u}(f_1^k,f_2^k)-h_i\Big)\, dx\, dt, \quad \mathcal R_4=\sum_{i=1}^2\int_0^\mathcal T\int_{\Gamma_i} \Big|\tilde{u}^0(0,S_2^k)\Big|^2\, dx\, dt,\quad $$
$$\text{and}\quad \mathcal R_5=\sum_{i=1}^2\int_0^\mathcal T\int_{\Gamma_i} \tilde{u}^0(0,S_2^k)\,\Big( \tilde{u}(f_1^k,f_2^k)-h_i\Big)\, dx\, dt.$$
By setting $
\frac{\partial \tilde{J}\left(f_1^{k+1}, f_2^{k+1}\right)}{\partial \zeta_1^k}=0$ and $\frac{\partial \tilde{J}\left(f_1^{k+1}, f_2^{k+1}\right)}{\partial \zeta_2^k}=0,$ one can deduce that the search step sizes $\zeta_1^k$ and $\zeta_2^k$ satisfy 
$$
\mathcal R_1 \zeta_1^k+\mathcal R_2 \zeta_2^k+\mathcal R_3 = 0,\, \, \, \text{ and }\; 
\mathcal R_2 \zeta_1^k+\mathcal R_4 \zeta_2^k+\mathcal R_5=0.
 $$
Therefore, the search step sizes $\zeta_1^k$ and $\zeta_2^k$ are given as follows:
\begin{equation}\label{search_step}
\zeta_1^k=\frac{\mathcal R_3 \mathcal R_4-\mathcal R_2 \mathcal R_5}{\mathcal R_2^2-\mathcal R_1 \mathcal R_4}, \quad \zeta_2^k=\frac{\mathcal R_1 \mathcal R_5-\mathcal R_2 \mathcal R_3}{\mathcal R_2^2-\mathcal R_1 \mathcal R_4} .
\end{equation}
It is well known that the most important step is to find a suitable stopping rule in an iteration algorithm. To deal with this issue, in the current work, we apply the discrepancy principle to stop the iteration procedure. According to the discrepancy principle, the iterative procedure is stopped when the following criterion is satisfied:
\begin{equation}\label{stop}
\tilde{J}\left(f_1^k, f_2^k\right) \leq \bar{\epsilon},
\end{equation}
with $\bar{\epsilon}=\frac{1}{2} \sum_{i=1,2}\left\|h_i^\epsilon-h_i\right\|_{L_2(\Gamma_i^\mathcal{T})}^2 \leq \epsilon^2$, where $h_i^\epsilon$, for $i=1, 2,$ is noisy perturbation of the data $h_i$. To this end, we summarize the main steps of our reconstruction approach in the following algorithm: 

\vspace{0.2cm}
\begin{algorithm}[H]
\begin{enumerate}

\vspace{0.2cm}
\item[{\bf Step 1.}] Set $k=0$ and choose initial guesses $f_1^0$ and $f_2^0$ for the unknowns $f_1$ and $f_2$, respectively.
\item[{\bf Step 2.}] Solve the direct problem \eqref{s3-2} with $(f_1, f_2)=(f_1^k, f_2^k)$ to get $\tilde{u}(f_1^k, f_2^k)$, and compute  the residuals 
$$r_{i}^k=\tilde{u}(f_1^k, f_2^k)\big|_{\Gamma_i^\mathcal{T}}-h_i^\epsilon, \quad \text{for}\, \, i=1, 2.$$
\item[{\bf Step 3.}] Solve the adjoint problem \eqref{s3-7} and evaluate the gradients $\tilde{J}_1^{\, \prime}(f_1^k, f_2^k)$ and $\tilde{J}_2^{\, \prime}(f_1^k, f_2^k)$.
\item[{\bf Step 4.}] Calculate the conjugate coefficients $\gamma_{i}^k$ by \eqref{conj_coef} and the directions $S_{i}^k$ by \eqref{direction}, for $i=1, 2$.
\item[{\bf Step 5.}] Calculate the step sizes $\zeta_{1}^k$ and $\zeta_{2}^k$ by \eqref{search_step}.
\item[{\bf Step 6.}] Update $(f_1^{k+1}, f_2^{k+1})$ by \eqref{process}.
\item[{\bf Step 7.}] If the condition \eqref{stop} is satisfied, then go to {\bf Step 8}. Otherwise set $k = k + 1$ and go to {\bf Step 2}.
\item[{\bf Step 8.}] End.

\end{enumerate}
    \caption{\it Conjugate Gradient Method {\bf (CGM)}}
    \label{algo1}
\end{algorithm}
\section{Numerical experiments}\label{sec5}
This section deals with the numerical solutions for both the direct and inverse problems. We first present the methods used for solving the forward and the adjoint problems. Then, we employ the Conjugate Gradient Method (CGM) for two-dimensional numerical computations with the aim of simultaneously reconstructing fluxes $f_1$ and $f_2$. We present numerical results from three example scenarios, highlighting the effectiveness of the CGM. The section also investigates the algorithm's convergence and stability, evaluating its overall effectiveness and performance in producing precise reconstructions.
\subsection{Numerical solutions of the forward and adjoint problems}\label{sol-iter}
In this subsection, we introduce an iterative approach for solving the direct and adjoint problems \eqref{s3-2} and \eqref{s3-7} numerically. To begin with, we consider a general fractional model as follows:

\begin{equation*}
(\mathcal{P}_\chi)\left\{\begin{aligned}
D_\chi^\beta u-div(k(|\nabla u|^2)\, \nabla u) & =F &  \text{in}\quad & \Omega_{\mathcal{T}}, \\
-k(|\nabla u|^2) \frac{\partial u} {\partial n}&=f_1 &\text{on}\quad & \Gamma_1^\mathcal{T},
\\
-k(|\nabla u|^2) \frac{\partial u} {\partial n}&=f_2 &\text{on} \quad& \Gamma_2^\mathcal{T},
\\
u & =0 &\text{on} \quad& \Gamma_3^\mathcal T \cup \Gamma_4^\mathcal T ,\\
u & =g &\text{in}\quad &   \Omega \times \{\chi\},
\end{aligned}\right. 
\end{equation*}
where $\chi = 0, \mathcal{T}$, the fractional operator $D_\chi^\beta$ represents the left Caputo fractional derivative for $\chi = 0$ and the right Caputo fractional derivative for $\chi = \mathcal{T}$.

Since the boundary conditions in problems $(\mathcal{P}_0)$ and $(\mathcal{P}_\mathcal T)$ are implicitly defined, finding their numerical solutions becomes challenging. Specifically, the explicit numerical approximation at interior points becomes unfeasible without knowledge of the solution values at the boundaries. To address this issue, we rely on that the solutions of problems $(\mathcal{P}_0)$ and $(\mathcal{P}_\mathcal T)$ can be obtained as limits of the solutions to the linearized problems \eqref{s3-2} and \eqref{s3-7}, respectively. With this in mind, we aim to develop an iterative approach that solves the following system:
\begin{equation*}
(\mathcal{LP}_\chi)\left\{\begin{aligned}
D_\chi^\beta u-div(\mathcal K\, \nabla u) & =F &  \text{in}\quad & \Omega_\mathcal T, \\
-\mathcal K(x,y,t)\, \frac{\partial u} {\partial \mathbf{n}}&=f_1 &\text{on}\quad & \Gamma_1^\mathcal T,
\\
-\mathcal K(x,y,t)\, \frac{\partial u} {\partial \mathbf{n}}&=f_2 &\text{on} \quad& \Gamma_2^\mathcal T,
\\
u & =0 &\text{on} \quad& \Gamma_3^\mathcal T \cup \Gamma_4^\mathcal T ,\\
u & =g &\text{in}\quad &   \Omega \times \{\chi\},
\end{aligned}\right. 
\end{equation*}
where the function $\mathcal{K}(x,y,t)$ is given appropriately.
\subsubsection{Iterative algorithm for solving forward and adjoint problems}
In this subsection, we aim to provide an efficient numerical approach that can accurately approximate the solutions of problems $(\mathcal P_\chi)$, for $\chi=0, \mathcal T$. Subsequently, we outline the iterative algorithm, which iteratively refines the solution and ensures convergence toward the desired accuracy. The algorithm's stepwise progression and convergence criteria are presented in detail, enabling the efficient numerical solution of the direct and adjoint problems. \\

As previously mentioned, our proposed method solves the linearized problems $(\mathcal{LP}_0)$ and $(\mathcal{LP}_{\mathcal T})$ numerically. To achieve this, we adopt a methodology akin to that found in \cite{salah6}, employing the implicit finite difference method (IFDM) as our primary tool for numerically solving partial differential equations. We arrive now to present the iterative approach to approximate the solution for problem $(\mathcal{P}_\chi)$. The iteration process is terminated when the following criterion is fulfilled:
\begin{equation}\label{stop2}
  \|u^{(n+1)}-u^{(n)}\|_{L_2(0,\mathcal T; H_0^1(\Omega))} \leq \bar{\theta},
\end{equation}
where $\bar{\theta} \ll 1$ is a tolerance parameter, $u^{(0)}\equiv 0$, and for $n \geq 1$, $u^{(n)}$ denotes the solution to the linearized problem associated to $(\mathcal{P}_\chi)$ which will be denoted hereafter by $(\mathcal{P}_\chi^{(n)})$. From \eqref{convergen}, the approximated solution $u_{app}$ of the system $(\mathcal{P}_\chi)$ can be taken as the limit of the sequence $\{ u^{(n)}\}_{n\geq 0}$ in the $L_2(0,\mathcal T; H_0^1(\Omega))$ norm. Namely, the approximated solution $u_{app}$ can be taken as
$$u_{app} = u^{(\eta^\star)}\; \text{ where }\; \eta^\star:= \min_{n\in \mathbb{N}}\big\{n; \text{ such that }\;  \|u^{(n+1)}-u^{(n)}\|_{L_2(0,\mathcal T; H_0^1(\Omega))}\leq \bar{\theta}\big \}.$$
In summary, the main steps of the proposed approach are outlined in the following algorithm:

\begin{algorithm}[H]
\begin{enumerate}

\vspace{0.2cm}
\item[{\bf Step 1}] Choose a tolerance parameter $\bar{\theta}$ and set $n = 0$. Initialize $u^{(0)}$ as $0$.

\item[{\bf Step 2}] Update the solution by solving problem $(\mathcal{P}_\chi^{(n+1)})$ using the current approximation $u^{(n)}$ to obtain $u^{(n+1)}$.

\item[{\bf Step 3}] Compute $\|u^{(n+1)}-u^{(n)}\|_{l_2(0,\mathcal{T}; H_0^1(\Omega))}$. If condition \eqref{stop2} is satisfied go to {\bf Step 5}. Otherwise, proceed to the next step.

\item[{\bf Step 4}] Increment the iteration counter: $n \rightarrow n + 1$. Return to {\bf Step 2}.

\item[{\bf Step 5}] Output the final approximated solution $u^{(\eta^\star)}$ and the stopping index $\eta^\star$ as the final results.

\end{enumerate}
    \caption{\it Iterative Approach for Solving Problems \eqref{s3-2} and \eqref{s3-7}}
    \label{solving-direct}
\end{algorithm}

By following this algorithm, we can iteratively improve the approximation of the solution for problem $(\mathcal{P}_\chi)$, ensuring convergence towards the desired accuracy.
\begin{remarks}\quad\\
\vspace{-0.9cm}
\begin{itemize}
   \item In case the exact solution is known or available, the stopping criterion \eqref{stop2} in the algorithm can be replaced by an alternative criterion. One such alternative is to use the condition 
   $\|u_{ex}-u^{(n)}\|_{L_2(0,\mathcal T; H_0^1(\Omega))} \leq \bar{\theta},$ which measures the discrepancy between the exact solution $u_{ex}$ and the solution $u^{(n)}$ at each iteration.

    \item The choice of the tolerance parameter $\bar{\theta}$ is crucial in balancing solution accuracy and computational efficiency. Smaller $\bar{\theta}$ values yield higher accuracy but may increase convergence time due to more iterations. Striking the right balance between accuracy and efficiency is vital when selecting $\bar{\theta}$.
    \end{itemize}
\end{remarks}
\subsubsection{Validation of the proposed algorithm}
In this subsection, we provide two examples to evaluate the performance of our numerical approach. Specifically, we test the iterative Algorithm  \ref{solving-direct} outlined in the previous subsection to solve the problems $(\mathcal{P}_0)$ and $(\mathcal{P}_\mathcal T)$ with known analytical solutions. Without loss of generality, in our computational procedure, we defined the computational domain $\Omega$ as the square region $[0, 1] \times [0, 1]$ and set the final time $\mathcal{T}$ to be 1. The step time and mesh sizes are taken to be $\tau=0.001$ and $h=0.05$, respectively.

\paragraph{\bf Example 1.} In this example, we  consider the problem $(\mathcal{P}_0)$ with a known exact analytical solution. The exact solution is given by:
$$u_{ex}(x,y,t)=E_{\beta,1}(-t^\beta)\,(1-x)\,(1-y),\; \; \forall (x,y,t) \in \Omega \times [0, \mathcal{T}].$$
By substituting this exact solution into the system $(\mathcal{P}_0)$, we can determine the corresponding forcing term, initial value, and fluxes. After obtaining the necessary inputs for the problem $(\mathcal{P}_0)$ based on the known exact solution, we proceed to apply Algorithm  \ref{solving-direct} to solve the problem numerically. In the first step, we examine the convergence behavior of the iteration process by analyzing the variation of the quantity $\|u^{(n+1)}-u^{(n)}\|_{L_2(0,\mathcal T; H_0^1(\Omega))}$ with respect to the iteration index $n$. Figure \ref{con_his_direct} illustrates this variation for two values of the parameter $\beta$, namely $\beta=0.3$ and $\beta=0.7$.
\begin{figure}[H]
     \centering
     \includegraphics[height=1.8in]{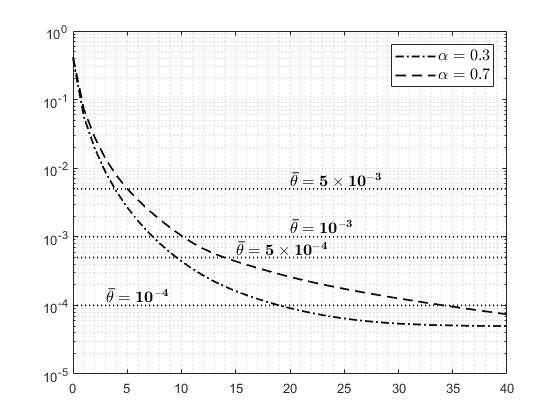}
         \caption{Convergence behavior of the iteration process: $\|u^{(n+1)}-u^{(n)}\|_{L_2(0,\mathcal T; H_0^1(\Omega))}$ with iteration index $n$.}
        \label{con_his_direct}
\end{figure}
Next, we present a comprehensive analysis of the algorithm's performance with respect to the different choices of the tolerance parameter $\bar{\theta}$ indicated in Figure \ref{con_his_direct}. For each chosen value of $\bar{\theta}$, the corresponding stopping index $\eta^\star$, the CPU time required for convergence, and the error $\|u_{ex}-u^{(\eta^\star)}\|{L_2(0,\mathcal T; H_0^1(\Omega))}$ are given in Table \ref{errors}.

\begin{table}[H]
    \centering
    \small
    \begin{tabular}{|l|c|c|c|c|c|}
        \hline
        & $\bar{\theta}$ & $5\times 10^{-3}$ & $10^{-3}$ & $5\times 10^{-4}$ & $10^{-4}$ \\
        \hline 
        \multirow{3}{*}{\centering $\beta=0.3$} & $\eta^\star$ & $4$&  $7$& $9$& $19$ \\
        \cline{2-6}
         & $\|u_{ex}-u^{(\eta^\star)}\|_{L_2(0,\mathcal T; H_0^1(\Omega))}$ & $1.23e-02$ & $4.91e-03$&$3.73e-03 $&$2.84e-03$ \\
        \cline{2-6}
         & CPU (sec) & $4.82$&$8.11$&$10.61$&$21.61$ \\
        \hline
        \multirow{3}{*}{\centering $\beta=0.7$} & $\eta^\star$ & $5$&$10$&$14$&$34$ \\
        \cline{2-6}
         & $\|u_{ex}-u^{(\eta^\star)}\|_{L_2(0,\mathcal T; H_0^1(\Omega))}$ & $2.11e-02$&$8.47e-03$&$5.83e-03$&$3.31e-03$ \\
        \cline{2-6}
         & CPU (sec) & $5.85$&$11.01$&$15.84$&$42.87$ \\
        \hline 
    \end{tabular}
    \caption{Performance for different tolerance parameters $\bar{\theta}$.}
    \label{errors}
\end{table}
In order to assess the accuracy and convergence of our approximation method, we compare the exact solution $u_{ex}$, and the solution obtained using our algorithm $u^{(\eta^\star)}$ at time $t=0.5$, for $\beta=0.3$ and $\beta=0.7$. Figure \ref{test_direct} provides a visual representation of this comparison, where the left subplot shows the exact solution $u_{ex}$, the middle subplot displays the solution obtained using our algorithm $u^{(\eta^\star)}$, and the right subplot presents the absolute error $|u_{ex}-u^{(\eta^\star)}|$ between the two solutions.
\begin{figure}[H]
     \centering
     \begin{subfigure}[b]{0.3\textwidth}
         \centering
         \includegraphics[height=1.7in]{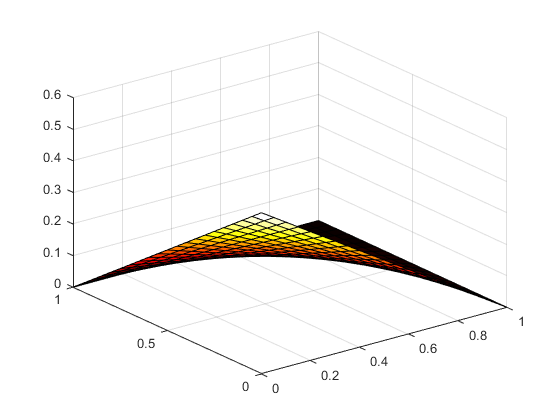}
         \caption{Exact solution}
         \label{fig:y equals x}
     \end{subfigure}
     \hfill
     \begin{subfigure}[b]{0.3\textwidth}
         \centering
         \includegraphics[height=1.7in]{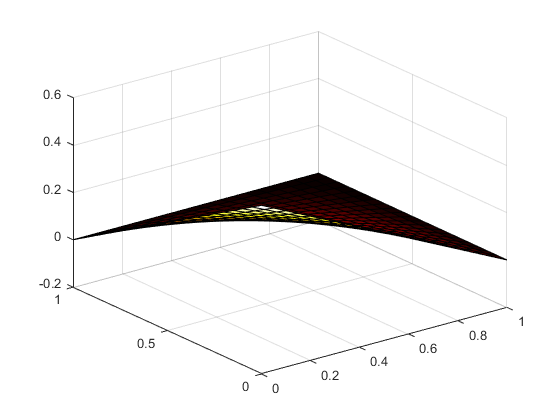}
         \caption{Approximated solution}
         \label{fig:three sin x}
     \end{subfigure}
     \hfill
     \begin{subfigure}[b]{0.3\textwidth}
         \centering
         \includegraphics[height=1.7in]{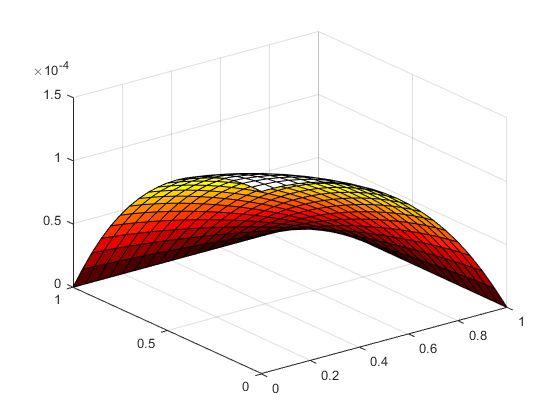}
         \caption{Absolute error}
         \label{fig:five over x}
     \end{subfigure}\\
     \begin{subfigure}[b]{0.3\textwidth}
         \centering
         \includegraphics[height=1.7in]{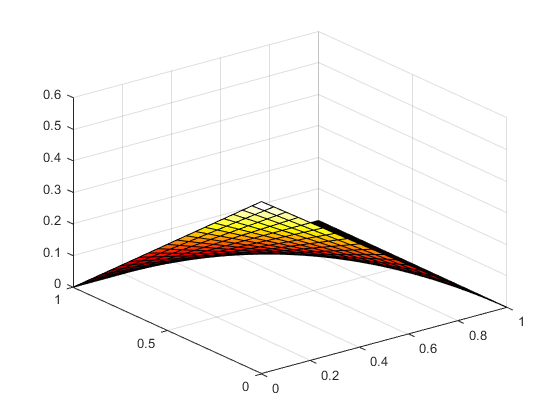}
         \caption{Exact solution}
         \label{fig:y equals x}
     \end{subfigure}
     \hfill
     \begin{subfigure}[b]{0.3\textwidth}
         \centering
         \includegraphics[height=1.7in]{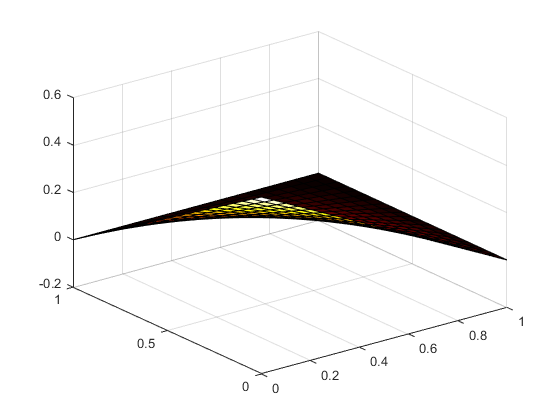}
         \caption{Approximated solution}
         \label{fig:three sin x}
     \end{subfigure}
     \hfill
     \begin{subfigure}[b]{0.3\textwidth}
         \centering
         \includegraphics[height=1.7in]{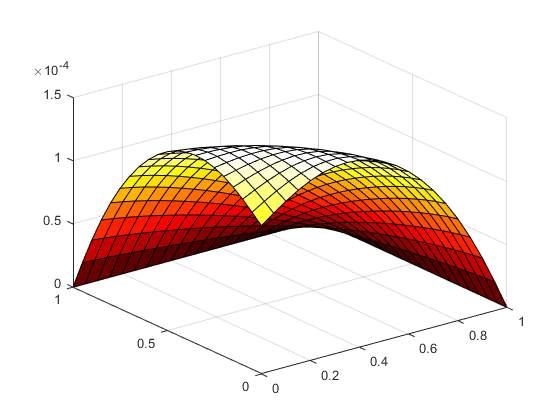}
         \caption{Approximated solution}
         \label{fig:five over x}
     \end{subfigure}
        \caption{Obtained results for Example 1 at time $t=0.5$. The top row represents the obtained results for $\beta=0.3$. The bottom row represents the obtained results for $\beta=0.7$. In each row; left: the analytic solution, middle: the approximated solution, right: the absolute error.}
        \label{test_direct}
\end{figure}
\paragraph{\bf Example 2} 
In this example, we aim to assess the performance of Algorithm \ref{solving-direct} in solving the problem $(\mathcal{P}_\mathcal T)$ which is an essential component in finding the solution to the optimization problem. We utilize the same algorithm as in the previous test, with specific modifications for the problem $(\mathcal{P}_\mathcal T)$. The exact solution in this example is given by $$v_{ex}(x,y,t)=(T-t)^{2\beta}\, (1-x)\, (1-y),\; \; \forall (x,y,t) \in \Omega \times [0, \mathcal{T}].$$ 
By substituting this exact solution $v_{ex}(x,y,t)$ into the system $(\mathcal{P}_\mathcal T)$, we can easily get the corresponding forcing term, the final value, and the fluxes. Similar to the first test, we investigate the convergence behavior of the iteration process by analyzing the variation of the quantity $\|v^{(n+1)}-v^{(n)}\|_{L_2(0,\mathcal T; H_0^{1}(\Omega))}$ with respect to the iteration index $n$. Figure \ref{con_his_adj} depicts the convergence history for two different values of the parameter $\beta$: $\beta=0.3$ and $\beta=0.7$.
\begin{figure}[H]
     \centering
     \includegraphics[height=1.8in]{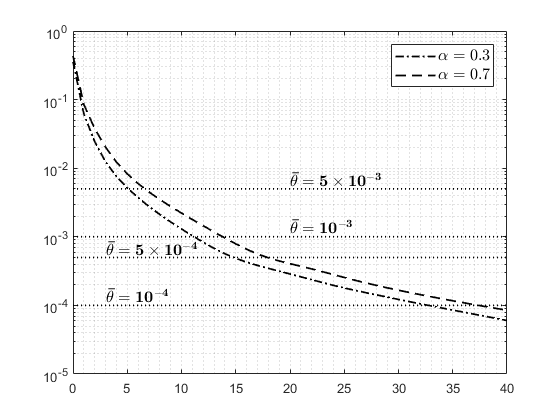}
         \caption{Convergence behavior of the iteration process: $\|v^{(n+1)}-v^{(n)}\|_{L_2(0,\mathcal T; H_0^1(\Omega))}$ with iteration index $n$.}
        \label{con_his_adj}
\end{figure}
In Table \ref{errors_adj}, we provide a detailed overview of the algorithm's performance for different values of the tolerance parameter $\bar{\theta}$. Specifically, we present the stopping index $\eta^\star$, which indicates the iteration at which the algorithm achieves convergence, the CPU time required for the algorithm to reach convergence, and the error $\|v_{ex}-v^{(\eta^\star)}\|_{L_2(0,\mathcal T; H_0^{1}(\Omega))}$, which quantifies the dissimilarity between the exact solution $v_{ex}$ and the final approximated one $v^{(\eta^\star)}$.
\begin{table}[H]
    \centering
    \small
    \begin{tabular}{|l|c|c|c|c|c|}
        \hline
        & $\bar{\theta}$ & $5\times 10^{-3}$ & $10^{-3}$ & $5\times 10^{-4}$ & $10^{-4}$ \\
        \hline 
        \multirow{3}{*}{\centering $\beta=0.3$} & $\eta^\star$ & $5$ & $11$ & $15$ & $33$ \\
        \cline{2-6}
         & $\|v_{ex}-v^{(\eta^\star)}\|_{L_2(0,\mathcal T; H_0^1(\Omega))}$ & $2.63e-02$ & $1.18e-02$ & $8.91e-03$ & $4.62e-03$ \\
        \cline{2-6}
         & CPU (sec) & $5.83$ & $12.03$ & $16.63$ & $40.87$ \\
        \hline
        \multirow{3}{*}{\centering $\beta=0.7$} & $\eta^\star$ & $7$ & $14$ & $17$ & $37$ \\
        \cline{2-6}
         & $\|v_{ex}-v^{(\eta^\star)}\|_{L_2(0,\mathcal T; H_0^1(\Omega))}$ & $3.81e-02$ & $2.28e-02$ & $1.69e-02$ & $9.83e-03$ \\
        \cline{2-6}
         & CPU (sec) & $8.13$ & $15.87$ & $18.93$ & $54.22$ \\
        \hline 
    \end{tabular}
    \caption{Performance for different tolerance parameters $\bar{\theta}$.}
    \label{errors_adj}
\end{table}
To evaluate the accuracy and convergence of our approximation method for the adjoint problem, we illustrate in Figure \ref{test_adj} a comparison between the exact adjoint solution $v_{ex}$ with the solution obtained using our algorithm $v^{(\eta^\star)}$ at $t=0.5$ for $\beta=0.3$ and $\beta=0.7.$
\begin{figure}[H]
     \centering
     \begin{subfigure}[b]{0.3\textwidth}
         \centering
         \includegraphics[height=1.7in]{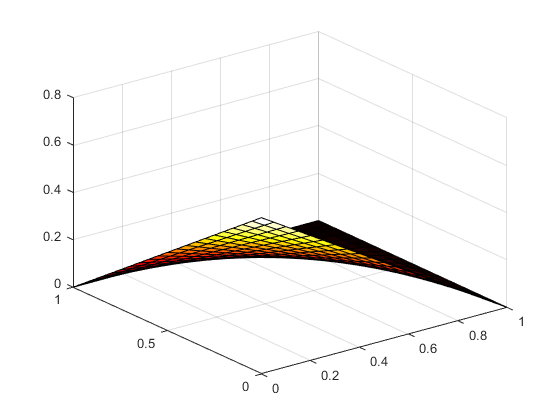}
         \caption{Exact solution}
         \label{fig:y equals x}
     \end{subfigure}
     \hfill
     \begin{subfigure}[b]{0.3\textwidth}
         \centering
         \includegraphics[height=1.7in]{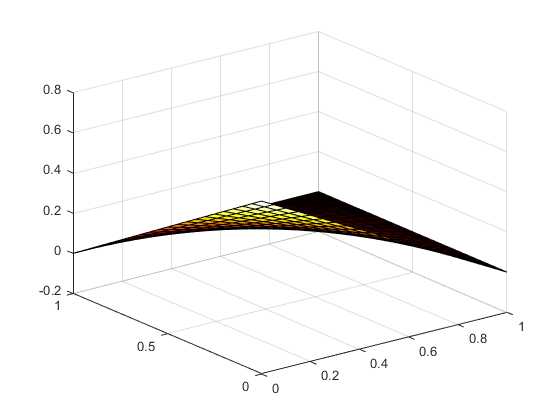}
         \caption{Approximated solution}
         \label{fig:three sin x}
     \end{subfigure}
     \hfill
     \begin{subfigure}[b]{0.3\textwidth}
         \centering
         \includegraphics[height=1.7in]{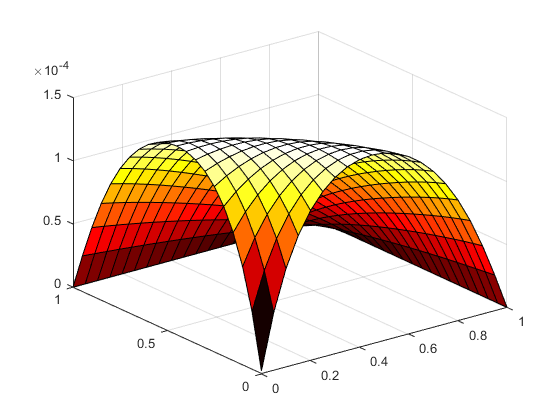}
         \caption{Absolute error}
         \label{fig:five over x}
     \end{subfigure}\\
     \begin{subfigure}[b]{0.3\textwidth}
         \centering
         \includegraphics[height=1.7in]{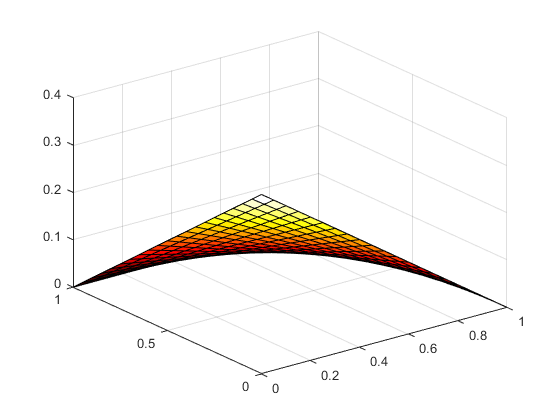}
         \caption{Exact solution}
         \label{fig:y equals x}
     \end{subfigure}
     \hfill
     \begin{subfigure}[b]{0.3\textwidth}
         \centering
         \includegraphics[height=1.7in]{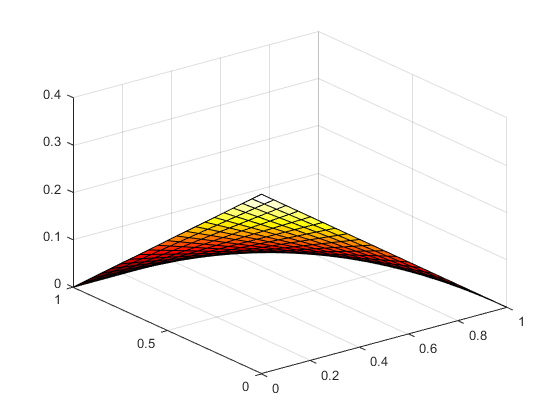}
         \caption{Approximated solution}
         \label{fig:three sin x}
     \end{subfigure}
     \hfill
     \begin{subfigure}[b]{0.3\textwidth}
         \centering
         \includegraphics[height=1.7in]{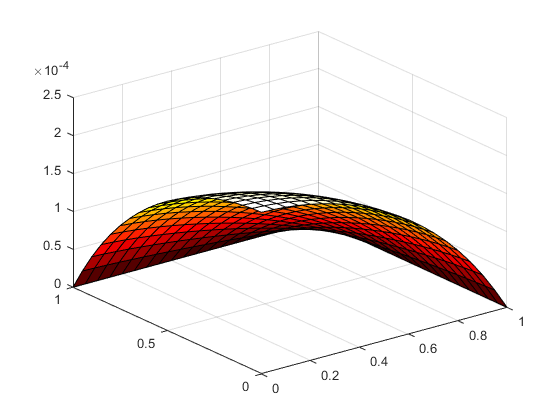}
         \caption{Absolute error}
         \label{fig:five over x}
     \end{subfigure}
        \caption{Obtained results for Example 2 at time $t=0.5$. The top row represents the obtained results for $\beta=0.3$. The bottom row represents the obtained results for $\beta=0.7$. In each row; left: the analytic solution, middle: the approximated solution, right: the absolute error.}
        \label{test_adj}
\end{figure}
In conclusion, Algorithm \ref{solving-direct} has proven effective in approximating solutions to problem $(\mathcal{P}_\chi)$. Our iterative computations and convergence analysis show that the algorithm reliably converges to the desired accuracy, as seen in the close match between the exact and algorithm-obtained solutions. The small absolute error further confirms the algorithm's reliability and accuracy in approximating the solution of the forward problem. These results highlight the algorithm's promise for solving the given problems and its potential utility in inversion algorithms.
\subsection{Numerical solution of the inverse problem}
This section is concerned with the numerical treatment of the minimization problem \eqref{s21111_1}. The goal is to identify the fluxes, denoted as $f_1$ and $f_2$, in the initial-boundary value problem. To achieve this, we will utilize Algorithm \ref{algo1} to obtain an approximation of the minimizer for the cost functional. To generate noisy observations $h_i^\epsilon$ (for $i=1, 2$), we simulate them by adding Gaussian additive noise to their analytical values, if available. The noisy observations are given by:
\begin{equation}\label{noiss}
  h_i^\epsilon=h_i+\gamma\ \textit{randn}(size(h_i))\,\|h_i\|_{L_2(0,\mathcal T; L_2(\Gamma_i))},  
\end{equation}
where $\gamma \geq 0$ represents the noise level and $\textit{randn}(.)$ is a "MATLAB" function that generates arrays of normally distributed random numbers with mean $0$ and standard deviation $\sigma = 1$. The corresponding noise level is calculated as:
$$\epsilon = \|h_i^\epsilon - h_i\|_{L_2(0,\mathcal T; L_2(\Gamma_i))}.$$
To show the accuracy of the numerical solutions, we compute the approximate $L_2$ error
denoted by
$$E(k, \gamma, f_i)=\|f_i^{k} - f_i^{ex}\|_{L_2(0,\mathcal T; L_2(\Gamma_i))}, \, \text{ for }\, i=1, 2,$$
where $f_i^{k}$ is the flux reconstructed at the $k$th iteration, and $f_i^{ex}$ is the exact solution.

\noindent To begin with, we assign in the numerical calculations the parameters and their values as follows:
\begin{itemize}
    \item The computational domain $\Omega$ is taken to be $\Omega=[0, 1] \times [0, 1]$.
    \item The final time $\mathcal T$ is fixed at $\mathcal T=1$.
    \item Since the algorithm involves multiple loops and has a significant computational time, the execution process will be expedited by choosing the step sizes in the IFDM as $h_x=0.05$ and $\tau=0.01$.
\item  For solving the forward and adjoint problems by Algorithm \ref{solving-direct}, the stopping index $\eta^\star$ is taken to be $\eta^\star=20.$
 \item The initial guesses $f_1^0$ and $f_2^0$ are both zero functions, i.e., $f_1^0=f_2^0 \equiv 0$.
 \item When the measured data is noise-free (i.e. $\gamma =0\%$), the parameter $\bar{\epsilon}$ is taken to be $1.25\times 10^{-7}$.
\end{itemize}
Hereafter, we denote by $k^\star$ the stopping index for our algorithm, which represents the minimum value such that condition \eqref{stop} is satisfied. In other words, 
$$k^\star:=\min\{k \in \mathbb{N}; \text{ such that } \tilde{J}\left(f_1^k, f_2^k\right) \leq \bar{\epsilon}\}.$$
With these considerations in place, we can now apply the conjugate gradient algorithm (Algorithm \ref{algo1}) to solve the problem \eqref{s21111_1} and present numerical results for three examples. This will demonstrate the effectiveness of the conjugate gradient algorithm, along with investigating its convergence and stability.
\paragraph{Example 1.} Let the exact solution for problem \eqref{s2-1} be $u(x,y,t)=t^\beta\, \log(2-x)\, (1-y)$, then 
\begin{itemize}
    \item $f_1(y,t)=\frac{2\,t^\beta\,(y-1)}{4+t^{2\alpha}\,((y-1)^2+4\log^2(2))},\, \, \forall (y,t) \in (0,1) \times (0,\mathcal T),$
    \item $f_2(x,t)=\frac{2\,t^\beta\,(2-x)^2\,\log(2-x)}{(2-x)^2+t^{2\beta}+t^{2\beta}\,(2-x)^2\,\log^2(2-x))},\, \, \forall (x,t) \in (0,1) \times (0,\mathcal T),$
\end{itemize}
will be reconstructed and the other data can be obtained by substituting the exact solution in \eqref{s2-1}.\\

In Figure 1, the comparison is presented between the exact fluxes $f_1$ and $f_2$ and their corresponding reconstructed fluxes (obtained after $k^\star$ iterations) at $t=0.5$. This comparison is conducted for various values of $\gamma$ from the set $\{0\%, 0.5\%, 1\%, 5\%\}$. Two cases are considered, corresponding to $\beta$ taking the values $0.3$ and $0.7$. Table \ref{perf_analytic} presents noise levels $\gamma$ and related performance metrics for {\it Example 1}.

\begin{figure}[H]
    \centering
    \begin{subfigure}[t]{0.5\textwidth}
        \centering
        \includegraphics[height=1.8in]{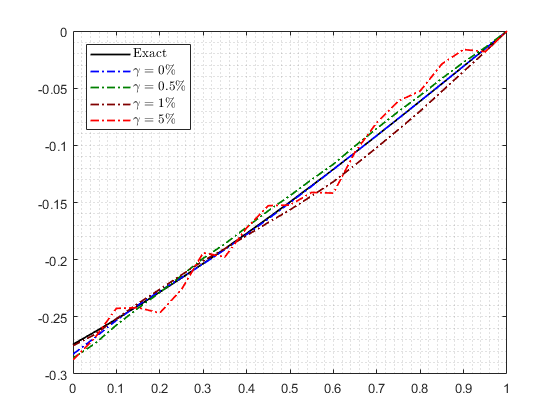}
        \caption{Reconstructed Results for $f_1$}
    \end{subfigure}%
    ~ 
    \begin{subfigure}[t]{0.5\textwidth}
        \centering
        \includegraphics[height=1.8in]{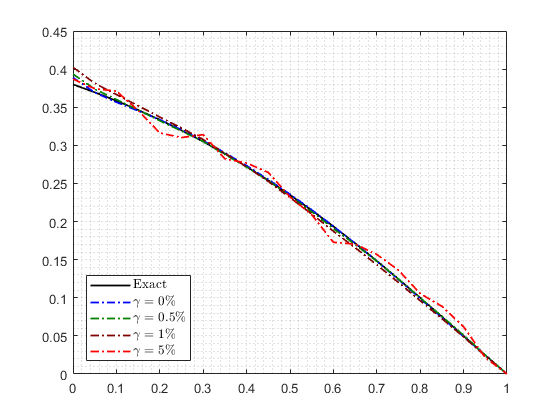}
        \caption{Reconstructed Results for $f_2$}
    \end{subfigure}\\
    \begin{subfigure}[t]{0.5\textwidth}
        \centering
        \includegraphics[height=1.8in]{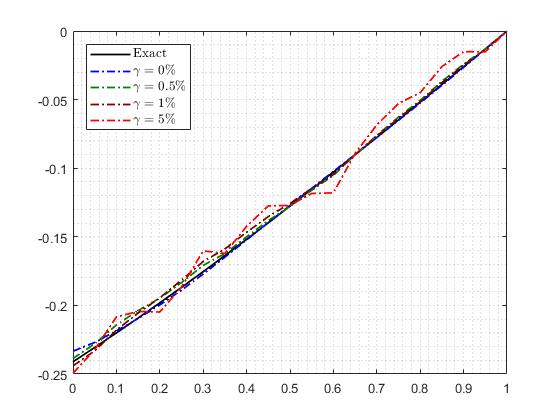}
        \caption{Reconstructed Results for $f_1$}
    \end{subfigure}%
    ~ 
    \begin{subfigure}[t]{0.5\textwidth}
        \centering
        \includegraphics[height=1.8in]{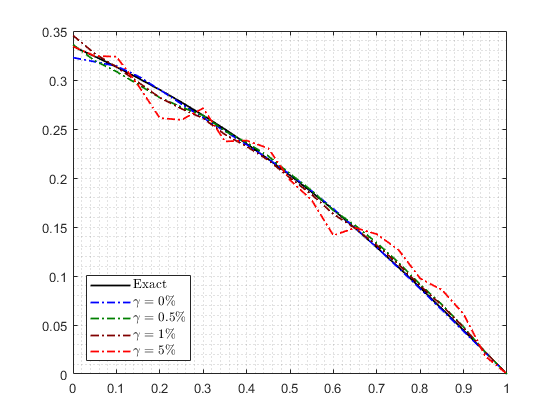}
        \caption{Reconstructed Results for $f_2$}
    \end{subfigure}
    \caption{The numerical results for  {\it Example 1} for various noise levels. The top row corresponds to $\beta=0.3$, and the bottom row corresponds to $\beta=0.7$.}
    \label{exemple1}
\end{figure}
\begin{table}[H]
    \centering
    \small
    \begin{tabular}{|l|c|c|c|c|c|}
        \hline
        & $\gamma$ & 0\% & 0.5\% & 1\% & 5\% \\
        \hline 
        \multirow{4}{*}{\centering $\beta=0.3$} & $\bar{\epsilon}$ & $1.25\times 10^{-7}$ & $1.43\times 10^{-7}$ & $2.86\times 10^{-7}$ & $1.43\times 10^{-5}$ \\
        \cline{2-6}
         & $k^\star$ & 683 & 491 & 151 & 46 \\
        \cline{2-6}
         & $E(k^\star,\gamma,f_1)$ & $4.71e{-03}$ & $1.41e{-02}$ & $1.49e{-02}$ & $1.73e{-02}$ \\
        \cline{2-6}
         & $E(k^\star,\gamma,f_2)$ & $4.87e{-03}$ & $1.92e{-02}$ & $2.01e{-02}$ & $2.14e{-02}$ \\
        \hline
        \multirow{4}{*}{\centering $\beta=0.7$} & $\bar{\epsilon}$ & $1.25\times 10^{-7}$ & $1.47\times 10^{-7}$ & $2.94\times 10^{-7}$ & $1.47\times 10^{-5}$ \\
        \cline{2-6}
         & $k^\star$ & 671 & 485 & 143 & 33 \\
        \cline{2-6}
         & $E(k^\star,\gamma,f_1)$ & $5.01\times 10^{-3}$ & $1.50e{-02}$ & $1.63e{-02}$ & $1.93e{-02}$ \\
        \cline{2-6}
         & $E(k^\star,\gamma,f_2)$ & $4.92e{-03}$ & $1.95e{-02}$ & $2.33e{-02}$ & $2.61e{-02}$ \\
        \hline 
    \end{tabular}
    \caption{Noise levels $\gamma$ with tolerance parameters $\bar{\epsilon}$, iteration steps $k^\star$, and $L_2$ errors ($f_1$ and $f_2$) in {\it Example 1}.}
    \label{perf_analytic}
\end{table}

\paragraph{Example 2.} In this example, the analytic solution of problem \eqref{s2-1} is unknown. The Dirichlet data on the boundaries $\Gamma_1^\mathcal{T}$ and $\Gamma_2^\mathcal{T}$ are constructed by solving the forward problem by using the  iterative approach, developed in subsection \ref{sol-iter}, in which we use the initial data $g(x,y)=0$, and
take a source function $F(x,y,t) = \sin(2\pi x y)$ and the exact fluxes
\begin{itemize}
    \item $f_1(y,t)=e^{-t}\,(t-t^2)\, \sin(3\pi y),\, \, \forall (y,t) \in (0,\ell_y) \times (0,\mathcal T),$
    \item $f_2(x,t)=e^{-t}\,(t-t^2)\, \sin(2\pi x),\, \, \forall (x,t) \in (0,\ell_x) \times (0,\mathcal T).$
\end{itemize}
Figures \ref{exemple1} and \ref{exemple2} illustrate the numerical results obtained for {\it Example 2}, using the conjugate gradient algorithm at the same noise levels chosen in {\it Example 1}. Both cases of $\beta = 0.3$ and $\beta = 0.7$ are included. 
\begin{figure}[H]
    \centering
    \begin{subfigure}[t]{0.5\textwidth}
        \centering
        \includegraphics[height=1.8in]{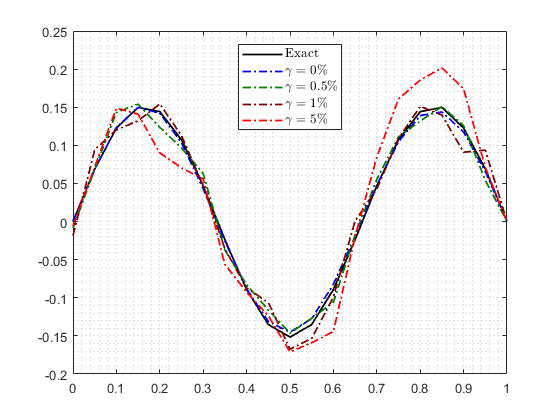}
        \caption{Reconstructed Results for $f_1$}
    \end{subfigure}%
    ~ 
    \begin{subfigure}[t]{0.5\textwidth}
        \centering
        \includegraphics[height=1.8in]{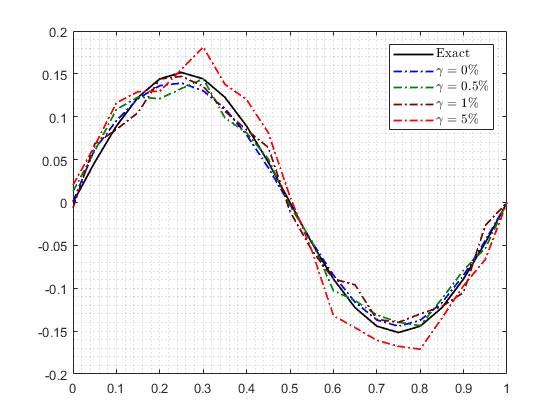}
        \caption{Reconstructed Results for $f_2$}
    \end{subfigure}\\
    \begin{subfigure}[t]{0.5\textwidth}
        \centering
        \includegraphics[height=1.8in]{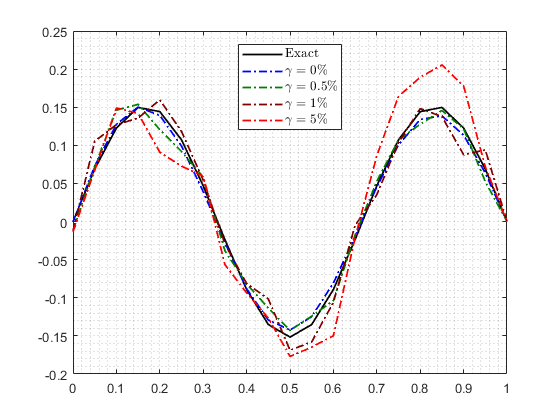}
        \caption{Reconstructed Results for $f_1$}
    \end{subfigure}%
    ~ 
    \begin{subfigure}[t]{0.5\textwidth}
        \centering
        \includegraphics[height=1.8in]{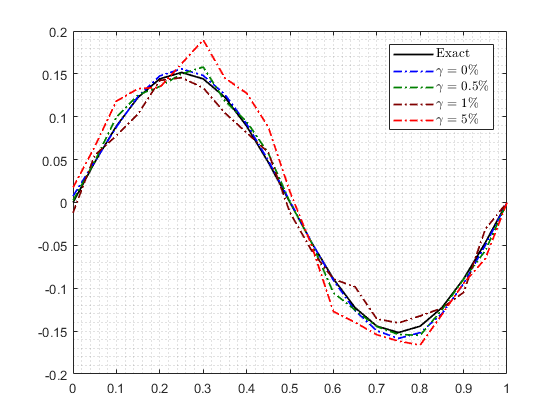}
        \caption{Reconstructed Results for $f_2$}
    \end{subfigure}
    \caption{The numerical results for {\it Example 2} for various noise levels. The top row corresponds to $\beta=0.3$, and the bottom row corresponds to $\beta=0.7$.}
    \label{exemple2}
\end{figure}
The choices of the noise levels $\gamma$ and the corresponding
numerical performances are listed in Table \ref{perf_analytic}.

\begin{table}[H]
    \centering
    \small
    \begin{tabular}{|l|c|c|c|c|c|}
        \hline
        & $\bar{\epsilon}$ & 0\% & 0.5\% & 1\% & 5\% \\
        \hline 
        \multirow{4}{*}{$\beta=0.3$} & $\gamma$ & $1.25\times 10^{-7}$ & $1.85\times 10^{-7}$ & $3.70\times 10^{-7}$ & $1.85\times 10^{-5}$ \\
        \cline{2-6}
         & $k^\star$ & 620 & 472 & 115 & 13 \\
        \cline{2-6}
         & $E(k^\star,\gamma,f_1)$ & $5.31e{-03}$ & $2.11e{-02}$ & $2.31e{-02}$ & $3.10e{-02}$ \\
        \cline{2-6}
         & $E(k^\star,\gamma,f_2)$ & $7.02e{-03}$ & $1.64e{-02}$ & $2.11e{-02}$ & $3.01e{-02}$ \\
        \hline
        \multirow{4}{*}{$\beta=0.7$} & $\gamma$ & $1.25\times 10^{-7}$ & $2.01\times 10^{-7}$ & $4.02\times 10^{-7}$ & $2.01\times 10^{-5}$ \\
        \cline{2-6}
         & $k^\star$ & 661 & 475 & 137 & 16 \\
        \cline{2-6}
         & $E(k^\star,\gamma,f_1)$ & $4.53e{-03}$ & $1.81e{-02}$ & $2.07e{-02}$ & $2.88e{-02}$ \\
        \cline{2-6}
         & $E(k^\star,\gamma,f_2)$ & $4.42e{-03}$ & $1.31e{-02}$ & $1.54e{-02}$ & $2.19e{-02}$ \\
        \hline 
    \end{tabular}
    \caption{Noise levels $\gamma$ with tolerance parameters $\bar{\epsilon}$, iteration steps $k^\star$, and $L_2$ errors in {\it Example 2}.}
    \label{perf_implicit}
\end{table}

The results in Figures \ref{exemple1} and \ref{exemple2}, along with Tables \ref{error_var1} and \ref{error_var2}, showcase remarkable accuracy for both examples. Even with up to $1\%$ noise in the exact Dirichlet data on $\Gamma_1^\mathcal{T}$ and $\Gamma_2^\mathcal{T}$, the obtained results remain highly precise. However, for noise levels exceeding $1\%$ (i.e., $\gamma=5\%$), oscillations emerge, impacting result quality due to the problem's ill-posed nature under higher noise conditions. Additionally, results for $\beta=0.3$ and $\beta=0.7$ are nearly identical, especially when identifying exact fluxes unaffected by the fractional order derivative $\beta$.\\

In the following, we investigate the convergence and stability of our algorithm's performance. Figure \ref{convergencee} demonstrates the convergence results for {\it Example 2}, considering various noise levels $\gamma \in \{0\%, 0.5\%, 1\%, 5\%\}$ while keeping $\beta$ fixed at $0.5$. Figure \ref{func_var} displays the behavior of the objective functional $\tilde{J}(f_1^k, f_2^k)$ concerning the iteration number $k$. This figure provides insights into the convergence behavior of the algorithm when recovering both unknown fluxes, $f_1$ and $f_2$. To assess the algorithm's performance, we examine the variations of the error functions $E(k, \gamma, f_1)$ and $E(k, \gamma, f_2)$ in Figures \ref{error_var1} and \ref{error_var2}, respectively, with respect to the iteration number $k$. These figures allow us to analyze how errors change during the iterative process.
\begin{figure}[H]
  \centering
  \begin{subfigure}[b]{0.3\linewidth}
    \centering
    \includegraphics[height=1.8in]{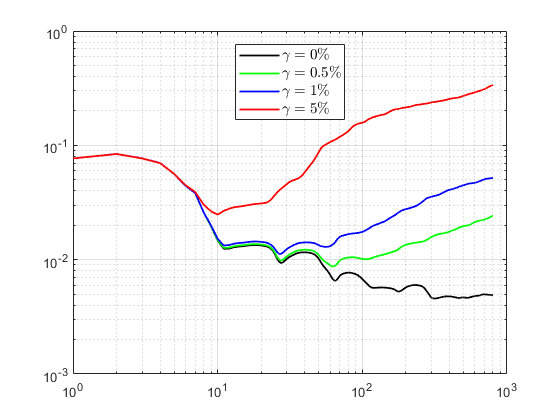}
    \caption{The errors $E(k, \gamma, f_1)$}
    \label{error_var1}
  \end{subfigure}
  \hfill
  \begin{subfigure}[b]{0.3\linewidth}
    \centering
    \includegraphics[height=1.8in]{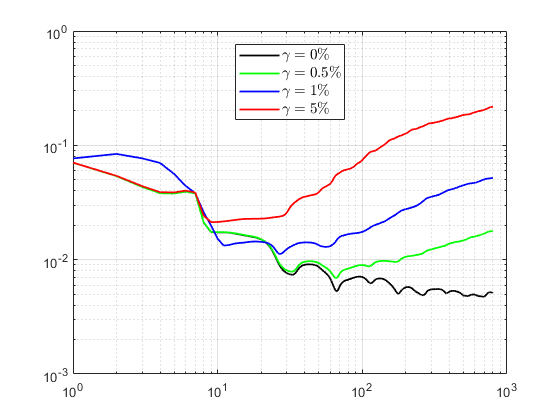}
    \caption{The errors $E(k, \gamma, f_2)$}
    \label{error_var2}
  \end{subfigure}
 \hfill 
  \begin{subfigure}[b]{0.3\linewidth}
    \centering
    \includegraphics[height=1.8in]{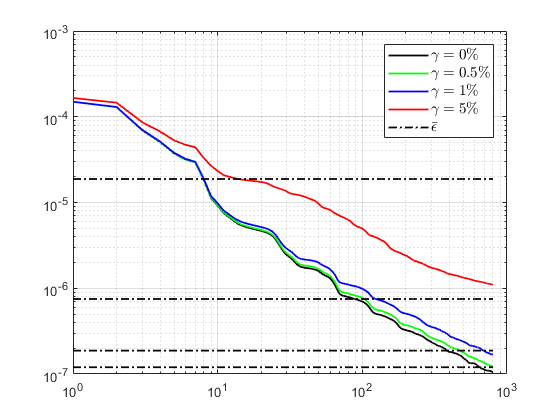}
    \caption{The functional $\tilde{J}(f_1^k, f_2^k)$}
    \label{func_var}
  \end{subfigure}
  \caption{The errors $E(k, \gamma, f_1)$, $E(k, \gamma, f_2)$ and the objective functional $\tilde{J}(f_1^k, f_2^k)$ for {\it Example 2} for various noise levels with $\beta=0.5$.}
  \label{convergencee}
\end{figure}
From Figures \ref{error_var1} and \ref{error_var2}, it's evident that the approximation errors $E(k, \gamma, f_1)$ and $E(k, \gamma, f_2)$ decrease as noise levels decrease. However, after a few iterations, these errors show a slight increase, indicating the need to stop at a suitable step. Figure \ref{func_var} illustrates the objective function's behavior, which consistently decreases with the iteration number $k$ and rapidly converges to a small positive value. This aligns with the prediction mentioned in remark \ref{decroi}. The stopping iteration numbers $k^\star \in \{650, 473, 125, 15\}$ are determined for different values of $\gamma$ ($0\%, 0.5\%, 1\%, 5\%$) based on the discrepancy principle \eqref{stop}. The tolerance parameters $\bar{\epsilon}$ are derived using the expressions $\bar{\epsilon}=\frac{1}{2} \sum_{i=1,2}|h_i^\epsilon-h_i|_{L_2(\Gamma_i^\mathcal{T})}^2$ and \eqref{noiss} for the respective noise levels of $\gamma$ ($0.5\%, 1\%, 5\%$). The $L_2$ errors corresponding to the unknown fluxes $f_1$ and $f_2$ are as follows: $E(k^\star, \gamma, f_1) \in \{\text{4.85e-03}, \text{1.97e-02}, \text{2.12e-02}, \text{2.91e-02}\}$ and $E(k^\star, \gamma, f_2) \in \{\text{4.79e-03}, \text{1.44e-02}, \text{1.62e-02}, \text{2.22e-02}\}$ for the chosen noise levels $\gamma$. These values indicate that the numerical solutions provide reasonably accurate approximations for both fluxes $f_1$ and $f_2$.
\paragraph{Example 3 (Application in Material Science).} In this example, we focus on finding fluxes denoted as $f_1$ and $f_2$ within the context of materials categorized as 'stiff' and 'soft'. To demonstrate the effectiveness of the proposed method, we take the function $k(T^2)$ as in  (\ref{engmaterial}). Our test scenario involves assessing the performance of the proposed iterative method for reconstructing the fluxes $f_1$ and $f_2$ with the following parameters:
\begin{itemize}
    \item {\it Case 1 (soft):} Young’s modulus $E$ is set to 110 GPa, and $T_0^2=0.02.$
    \item {\it Case 2 (stiff):} Young’s modulus $E$ is set to 210 GPa, and $T_0^2=0.027.$
\end{itemize}
In both cases,  the Poisson coefficient  $\nu$ and the  strain hardening exponent $\kappa$ are taken to be 0.3 and 0.5, respectively. Figure \ref{mat_function} illustrates the variation of the plasticity function $k(T^2)$ in both cases (stiff and soft materials). 
\begin{figure}[H]
    \centering
    \includegraphics[height=1.8in]{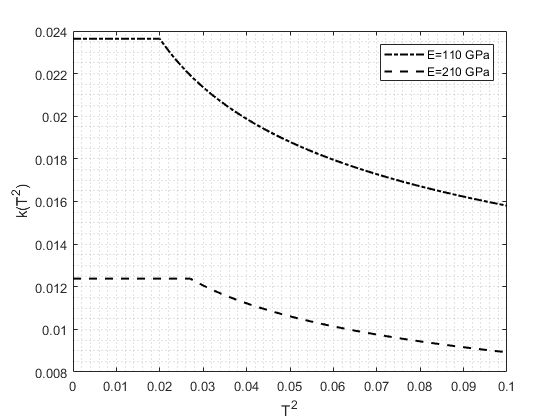}
    \caption{Variation of the plasticity function $k(T^2)$ for stiff and soft engineering materials.}
    \label{mat_function}
\end{figure}
Without loss of generality, we fix the fractional derivative order $\beta=0.5$ and we will present the results of our flux reconstruction efforts at time $t=0.1$ in the two aforementioned cases. Figure \ref{case1} provides comparisons between the exact solutions and the reconstructed ones for \(f_1\) and \(f_2\) in Case 1.
\begin{figure}[H]
    \centering
    \begin{subfigure}[t]{0.5\textwidth}
        \centering
        \includegraphics[height=1.8in]{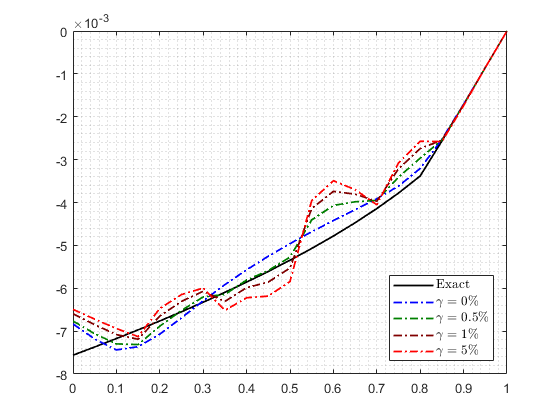}
        \caption{Reconstructed Results for $f_1$}
    \end{subfigure}%
    ~ 
    \begin{subfigure}[t]{0.5\textwidth}
        \centering
        \includegraphics[height=1.8in]{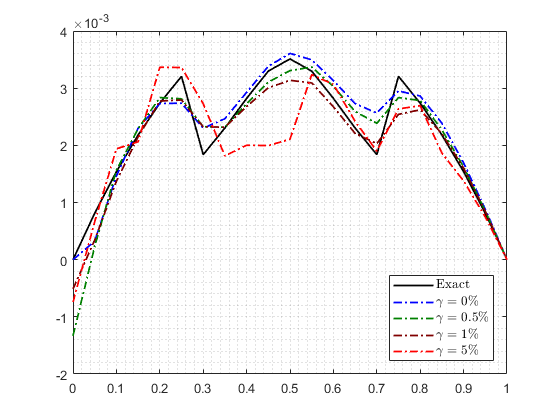}
        \caption{Reconstructed Results for $f_2$}
    \end{subfigure}
    \caption{Comparison of reconstructed results for $f_1$ and $f_2$ in 'soft' materials.}
    \label{case1}
\end{figure}
Figure \ref{case2} illustrates the comparisons between the exact solutions and the reconstructed ones in Case 2.
\begin{figure}[H]
    \centering
    \begin{subfigure}[t]{0.5\textwidth}
        \centering
        \includegraphics[height=1.8in]{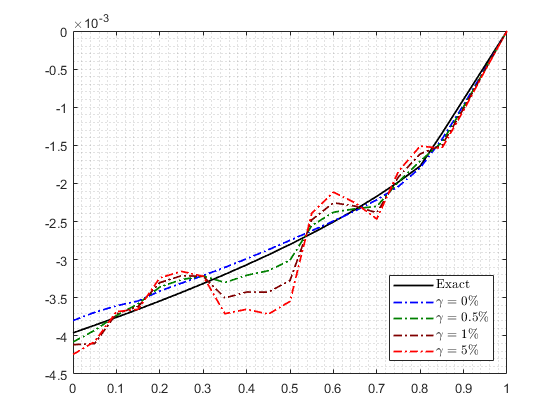}
        \caption{Reconstructed Results for $f_1$}
    \end{subfigure}%
    ~ 
    \begin{subfigure}[t]{0.5\textwidth}
        \centering
        \includegraphics[height=1.8in]{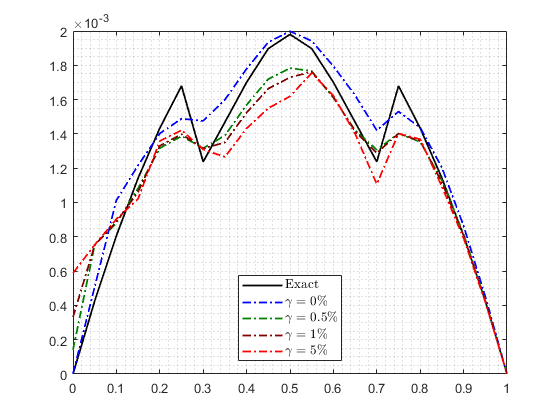}
        \caption{Reconstructed Results for $f_2$}
    \end{subfigure}
    \caption{Comparison of reconstructed results for $f_1$ and $f_2$ in 'stiff' materials.}
    \label{case2}
\end{figure}

In summary, our study successfully demonstrated the effectiveness of our iterative method for flux reconstruction in 'stiff' and 'soft' materials by varying Young's modulus $E$. We consistently achieved accurate and reliable results across different Young's modulus settings, showcasing the robust performance of our approach. Notably, in both cases, the exact solutions to be reconstructed were nonsmooth, affirming that our algorithm excels in the detection of nonsmooth fluxes. These findings highlight its potential applicability in materials science and engineering, particularly in the context of flux detection and characterization.

\end{document}